\documentclass[reqno]{siamart171218}
\usepackage{amsmath, amscd, amssymb}
\usepackage{bm, mathrsfs}
\usepackage{mathtools}
\usepackage{graphicx,epsfig,epstopdf}
\usepackage{enumerate}
\usepackage{subfig}
\usepackage{url}
\usepackage{array}
\newcolumntype{P}[1]{>{\centering\arraybackslash}p{#1}}
\newcolumntype{M}[1]{>{\centering\arraybackslash}m{#1}}
\usepackage{algorithmic}
\usepackage[ddmmyy,hhmmss]{datetime}
\usepackage{soul}
\newcommand{\mathcolorbox}[2]{\colorbox{#1}{$\displaystyle #2$}}

\newsiamremark{remark}{Remark}
\newsiamremark{example}{Example}

\DeclareMathOperator{\diag}{diag}
\DeclareMathOperator{\spn}{span}	

\DeclareMathOperator{\spnS}{\spn^{\mathbb{S}}}
\DeclareMathOperator{\spec}{spec}	

\newcommand{\tr}[1]{\text{trace}\left(#1\right)}

\renewcommand{\d}{\,\mathrm{d}}		

\newcommand{\fold}[1]{{\tt fold}{\left(#1\right)}}
\newcommand{\unfold}[1]{{\tt unfold}{\left(#1\right)}}
\newcommand{\bcirc}[1]{{\tt bcirc}{\left(#1\right)}}

\usepackage{xspace}		

\newcommand{\fomfom}{(FOM)\textsuperscript{2}\xspace}

\newcommand{\bfomfom}{B\fomfom}

\newcommand{\spC}{\mathbb{C}}				

\newcommand{\spCnn}{\spC^{n \times n}}

\newcommand{\spCNN}{\spC^{np \times np}}
\newcommand{\spCNn}{\spC^{np \times n}}
\newcommand{\spCNnm}{\spC^{np \times nm}}

\newcommand{\spCtensor}{\spC^{n_1 \times n_2 \times n_3}}
\newcommand{\spK}{\mathscr{K}}				
\newcommand{\spR}{\mathbb{R}}				
\newcommand{\spS}{\mathbb{S}}

\newcommand{\spSmm}{\spS^{m \times m}}
\newcommand{\spKS}{\spK^{\spS}}        




\newcommand{\vd}{\bm{d}}
\newcommand{\ve}{\bm{e}}
\newcommand{\vehat}{\widehat{\ve}}


\newcommand{\vB}{\bm{B}}

\newcommand{\vDtil}{\widetilde{\bm{D}}}
\newcommand{\vE}{\bm{E}}

\newcommand{\vF}{\bm{F}}

\newcommand{\vQ}{\bm{Q}}

\newcommand{\vV}{\bm{V}}

\newcommand{\vW}{\bm{W}}
\newcommand{\vX}{\bm{X}}

\newcommand{\vY}{\bm{Y}}

\newcommand{\vZ}{\bm{Z}}


\newcommand{\Beta}{B}

\renewcommand{\AA}{\mathcal{A}}
\newcommand{\BB}{\mathcal{B}}
\newcommand{\CC}{\mathcal{C}}
\newcommand{\DD}{\mathcal{D}}

\newcommand{\HH}{\mathcal{H}}

\newcommand{\HHunder}{\underline{\HH}}
\newcommand{\II}{\mathcal{I}}

\newcommand{\XX}{\mathcal{X}}
\newcommand{\XXarrow}{\vec{\XX}}


\newcommand{\bVV}{\bm{\mathcal{V}}}

\newcommand{\inv}{{-1}}

\newcommand{\one}{{(1)}}

\renewcommand{\k}{{(k)}}
\newcommand{\m}{{$(m)$}\xspace}

\newcommand{\kmore}{{(k+1)}}

\newcommand{\errfunc}{\varDelta}	
\newcommand{\errapprox}{\vDtil}	

\newcommand{\fAABB}{f(\AA)*\BB}

\newcommand{\fAB}{f(A)\vB}

\newcommand{\norm}[1]{\left\lVert#1\right\rVert}

\newcommand{\normF}[1]{\norm{#1}_{\text{F}}}

\makeatletter
\newsavebox{\@brx}
\newcommand{\llangle}[1][]{\savebox{\@brx}{\(\m@th{#1\langle}\)}%
	\mathopen{\copy\@brx\kern-0.5\wd\@brx\usebox{\@brx}}}
\newcommand{\rrangle}[1][]{\savebox{\@brx}{\(\m@th{#1\rangle}\)}%
	\mathclose{\copy\@brx\kern-0.5\wd\@brx\usebox{\@brx}}}
\makeatother

\newcommand{\IPS}[2]{{\llangle #1,#2 \rrangle}_{\spS}}

\newcommand{\IPSnoarg}{\IPS{\cdot}{\cdot}}

\newcommand{\N}{{N}}

\newcommand{\MATLAB}{\textsc{Matlab}\xspace}
\newcommand{\bigO}{\mathcal{O}}
\newcommand{\cost}{\mathcal{C}}

\usepackage{xpatch}
\makeatletter
\xpatchcmd{\listoftodos}{\@starttoc}{\mbox{}\par\@starttoc}{}{}
\makeatother
\setcounter{tocdepth}{1}

\newcommand{\tol}{\texttt{tol}\xspace}
\newcommand{\cf}{cf.\ }

\definecolor{SunsetNavy}{HTML}{160A47}
\definecolor{SunsetNavyLL}{HTML}{D0CEDA}

\definecolor{SunsetPurpleD}{HTML}{430254}
\definecolor{SunsetPurple}{HTML}{600479}
\definecolor{SunsetPurpleL}{HTML}{9F68AE}
\definecolor{SunsetPurpleLL}{HTML}{CFB3D6}

\definecolor{SunsetPinkD}{HTML}{6C0A42}
\definecolor{SunsetPink}{HTML}{9B0F5F}
\definecolor{SunsetPinkL}{HTML}{C36F9F}
\definecolor{SunsetPinkLL}{HTML}{E1B7CF}

\definecolor{SunsetRedD}{HTML}{8C121A}
\definecolor{SunsetRed}{HTML}{C81B26}
\definecolor{SunsetRedL}{HTML}{DE767C}
\definecolor{SunsetRedLL}{HTML}{EEBABD}

\definecolor{SunsetOrangeD}{HTML}{A84815}
\definecolor{SunsetOrange}{HTML}{F1671F}
\definecolor{SunsetOrangeL}{HTML}{F59462}
\definecolor{SunsetOrangeLL}{HTML}{F9C2A5}

\newcommand{\TheTitle}{%
  The tensor t-function: a definition for functions of third-order tensors
}

\newcommand{\TheShortTitle}{Tensor t-function}

\newcommand{\TheName}{Kathryn Lund}

\newcommand{\TheAddress}{%
  Charles University, Prague, Czech Republic
  (\email{kathryn.lund@karlin.mff.cuni.cz}).
}

\newcommand{\TheFunding}{%
  This work was supported in part by the U.S.\ National Science Foundation under
  grant DMS-1418882, the U.S.\ Department of Energy under grant DE-SC 0016578, and the Charles University PRIMUS grant, project no. PRIMUS/19/SCI/11
  \@.
}

\author{\TheName\thanks{\TheAddress}}
\title{{\TheTitle}\thanks{\TheFunding}}
\headers{\TheShortTitle}{\TheName}
\ifpdf%
\hypersetup{%
  pdftitle={\TheTitle},
  pdfauthor={\TheName}
}
\fi

\begin{document}
\maketitle

\begin{abstract}
	A definition for functions of multidimensional arrays is presented.  The definition is valid for third-order tensors in the tensor t-product formalism, which regards third-order tensors as block circulant matrices.  The tensor function definition is shown to have similar properties as standard matrix function definitions in fundamental scenarios.  To demonstrate the definition's potential in applications, the notion of network communicability is generalized to third-order tensors and computed for a small-scale example via block Krylov subspace methods for matrix functions.  A complexity analysis for these methods in the context of tensors is also provided.
\end{abstract}

\begin{keywords}
	tensors, multidimensional arrays, tensor t-product, matrix functions, block circulant matrices, network analysis
\end{keywords}

{\small \textbf{AMS classifications.} 15A69, 65F10, 65F60}

\section{Introduction}\label{sec:intro}
Functions of matrices-- that is, $f(A)$, where $f$ is a scalar function, and $A$ a square matrix-- have applications in a number of fields.  They emerge as measures of centrality and communicability in networks \cite{EstradaHigham2010} and as exponential integrators in differential equations \cite{HochbruckOstermann2010}.  As high-dimensional analogues of matrices, tensors also play crucial roles in network analysis \cite{Cichocki2014} and multidimensional differential equations \cite{Khoromskij2015}.  A variety of decompositions and algorithms have been developed over the years to extract and understand properties of tensors \cite{KoldaBader2008}.  A natural question is whether the notion of functions of tensors, defined in analogy to functions of matrices as a scalar function taking a tensor $\AA$ as its argument, could prove to be yet another useful tool for studying multidimensional data.

Unfortunately, the definition of such a notion is not nearly as straightforward for tensors as it is for matrices.  For matrices, the definitions of integration, polynomials, eigendecompositions (ED), and singular value decompositions (SVD) are unique and well established throughout linear algebra, and all of these notions serve as building blocks for definitions of matrix functions, reducing to the same object under reasonable circumstances \cite{ArrigoBenziFenu2016, Higham2008}.  With the abundance of generalizations for the ED and SVD in the world of tensors \cite{deLathauwerdeMoorVandewalle2000, KilmerBramanHaoHoover2013, KoldaBader2008, KoldaMayo2011, Lim2005, NgQiZhou2009, Qi2005, Qi2007}, there is no guarantee that a tensor function definition based on one type of decomposition is equivalent to a definition based on another type.

With that in mind, we embark on a first look at defining functions of tensors via a simple paradigm, the tensor t-product formalism \cite{Braman2010, KilmerBramanHaoHoover2013, KilmerMartin2011}.  This paradigm effectively regards third-order tensors as block circulant matrices, and we exploit this fact to develop a tensor function definition $\fAABB$ that effectively reduces to $\fAB$, i.e., the action of a matrix function on a block vector.  This fact, along with properties of block circulant matrices, also allows for the transfer of important matrix function properties to our tensor function definition.  Although the paradigm we consider is limited in applicability to only third-order tensors, it serves as an important first step in exploring tensor function definitions.

Thanks to the equivalence of our tensor function definition with the $\fAB$ problem, we have a number of options for computing $\fAABB$, such as \cite{AlMohyHigham2011, FrommerLundSzyld2017, FrommerLundSzyld2019, SimonciniLopez2006}.  We focus on adapting the block Krylov subspace methods (KSMs) from \cite{FrommerLundSzyld2017}, and in light of the so-called ``curse of dimensionality," we also present a computational complexity analysis for this algorithm in the tensor function context.  We also propose modifications to the algorithm based on the discrete Fourier transform, which were shown in \cite{KilmerBramanHaoHoover2013} to increase computational efficiency for the tensor t-product.

This report proceeds as follows.  We recapitulate matrix function definitions and properties in Section~\ref{sec:def_matrix_functions}.  Section~\ref{sec:def_tensor_functions} restates the tensor t-product framework and poses a definition for the {\em tensor t-function}, a new definition for a tensor function within this framework.  We also present statements and proofs of t-function properties in analogy to the core properties of matrix functions.  A possible application for the tensor t-exponential as a generalized communicability measure is discussed in Section~\ref{sec:network_analysis}.  In Section~\ref{sec:methods_compute}, we show how block KSMs for matrix functions can be used to compute the tensor t-function, and demonstrate the efficacy of these methods for the tensor t-exponential.  We make concluding remarks in Section~\ref{sec:conclusions}.

Before proceeding, we make a brief comment on syntax and disambiguation: the phrase ``tensor function" already has an established meaning in physics; see, e.g., \cite{Betten2008, Boehler1987, Zheng2009}.  The most precise phrase for our object of interest would be ``a function of a multidimensional array," in analogy to ``a function of a matrix."  However, since combinations of prepositional phrases can be cumbersome in English, we risk compounding literature searches by resorting to the term ``tensor function."

\subsection{Definitions of matrix functions} \label{sec:def_matrix_functions}
Following \cite{FrommerSimoncini2008a, Higham2008}, we concern ourselves with the three main matrix function definitions, based on the Jordan canonical form, Hermite interpolating polynomials, and the Cauchy-Stieltjes integral form.  In each case, the validity of the definition boils down to the differentiability of $f$ on the spectrum of $A$.  When $f$ is analytic on the spectrum of $A$, all the definitions are equivalent, and we can switch between them freely.

Let $A \in \spCnn$ be a matrix with spectrum $\spec(A) := \{\lambda_j\}_{j=1}^{N}$, where $N \leq n$ and the $\lambda_j$ are distinct.  An $m \times m$ Jordan block $J_m(\lambda)$ of an eigenvalue $\lambda$ has the form
\[
J_m(\lambda) =
\begin{bmatrix}
\lambda & 1 &  &  \\
& \lambda & \ddots &  \\
&	& \ddots & 1 \\
&	&	& \lambda 
\end{bmatrix}
\in \spC^{m \times m}.
\]
Suppose that $A$ has Jordan canonical form
\begin{equation} \label{eq:Jordan}
A = X J X^\inv = X^\inv \diag(J_{m_1}(\lambda_{j_1}), \ldots, J_{m_p}(\lambda_{j_\ell})) X,
\end{equation}
with $p$ blocks of sizes $m_i$ such that $\sum_{i=1}^{p} m_i = n$, and where the values \linebreak$\{\lambda_{j_k}\}_{k=1}^\ell \in \spec(A)$.  Note that eigenvalues may be repeated in the sequence $\{\lambda_{j_k}\}_{k=1}^\ell$.  Let $n_j$ denote the {\em index} of $\lambda_j$, or the size of the largest Jordan block associated to~$\lambda_j$.

A function is {\em defined on the spectrum of $A$} if all the following values exist:
\[
f^{(k)}(\lambda_j), \qquad k = 0, \ldots, n_j-1, \qquad j = 1, \ldots, N.
\]

\begin{definition} \label{def:mat_func_jordan}
	Suppose $A \in \spCnn$ has Jordan form~\eqref{eq:Jordan} and that $f$ is defined on the spectrum of $A$.  Then we define
	\[
	f(A) := X f(J) X^\inv,
	\]
	where $f(J) := \diag(f(J_{m_1}(\lambda_{j_1})), \ldots, f(J_{m_p}(\lambda_{j_\ell})))$, and
	\[
	f(J_{m_i}(\lambda_{j_k}))
	:=
	\begin{bmatrix}
	f(\lambda_{j_k}) & f'(\lambda_{j_k}) & \frac{f''(\lambda_{j_k})}{2!}& \dots		& \frac{f^{(n_{j_k}-1)}(\lambda_{j_k})}{(n_{j_k}-1)!}	\\
	0			 & f(\lambda_{j_k})	 & f'(\lambda_{j_k}) 			& \dots		& \vdots	\\
	\vdots		 & \ddots		 & \ddots			  		& \ddots	& \frac{f''(\lambda_{j_k})}{2!}	\\
	\vdots		 &				 & \ddots 			  		& \ddots    & f'(\lambda_{j_k})		\\
	0			 & \dots		 & \dots 			  		& 0			& f(\lambda_{j_k})
	\end{bmatrix} \in \spC^{m_i \times m_i}
	\]
\end{definition}
Note that when $A$ is diagonalizable with $\spec(A) = \{\lambda_j\}_{j=1}^n$ (possibly no longer distinct), Definition~\ref{def:mat_func_jordan} reduces to
\[
f(A) = X \diag(f(\lambda_1), \ldots, f(\lambda_n)) X^\inv.
\]

Matrix powers are well defined, so a scalar polynomial evaluated on a matrix is naturally defined.  Given $p(z) = \sum_{k=0}^{m} z^k c_k$, for some $\{c_k\}_{k=1}^m \subset \spC$, we have that $p(A) := \sum_{k=1}^m A^k c_k$.  Based on this, we can define non-polynomial functions of matrices by using again derivatives as we did in Definition~\ref{def:mat_func_jordan}.
\begin{definition} \label{def:mat_func_Hermite}
	Suppose that $f$ is defined on $\spec(A)$, and let $p$ with \linebreak$\deg p \leq \sum_{j=1}^{N} n_j$ be the unique Hermite interpolating polynomial satisfying
	\[
	p^{(k)}(\lambda_j) = f^{(k)}(\lambda_j), \mbox{ for all } k = 0, \ldots, n_{j-1}, \quad j = 1, \ldots, N.
	\]
	We then define $f(A) := p(A)$.
\end{definition}

\begin{theorem}[Theorem~1.3 from \cite{Higham2008}] \label{thm:polynomial_equivalence}
	For polynomials $p$ and $q$ and $A \in \spCnn$, $p(A) = q(A)$ if and only if $p$ and $q$ take the same values on the spectrum of $A$.
\end{theorem}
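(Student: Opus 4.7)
The plan is to reduce the biconditional to a statement about the single polynomial $r := p - q$. By linearity of polynomial evaluation, $p(A) - q(A) = r(A)$, and similarly $p^{(k)}(\lambda_j) - q^{(k)}(\lambda_j) = r^{(k)}(\lambda_j)$ for every $k$ and $j$. Hence the theorem is equivalent to the claim that $r(A) = 0$ if and only if $r^{(k)}(\lambda_j) = 0$ for all $k = 0, \ldots, n_j - 1$ and $j = 1, \ldots, N$. I would prove this equivalent claim.

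First I would invoke the Jordan form~\eqref{eq:Jordan} of $A$. Since polynomial evaluation commutes with similarity (powers transform as $(XJX^\inv)^k = XJ^kX^\inv$, and polynomials are $\spC$-linear combinations of powers), we have $r(A) = X r(J) X^\inv$. As $X$ is invertible, $r(A) = 0$ iff $r(J) = 0$. Moreover, $J$ is block diagonal, so $r(J)$ is block diagonal with blocks $r(J_{m_i}(\lambda_{j_k}))$; thus $r(J) = 0$ iff each Jordan block $r(J_{m_i}(\lambda_{j_k}))$ equals the zero matrix.

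The crux is then to identify the entries of $r(J_m(\lambda))$ for a generic Jordan block. Writing $J_m(\lambda) = \lambda I + N$ with $N$ the nilpotent shift satisfying $N^m = 0$, the binomial theorem gives
\[
J_m(\lambda)^k = \sum_{\ell = 0}^{\min(k, m-1)} \binom{k}{\ell} \lambda^{k-\ell} N^\ell.
\]
Summing against the coefficients of $r$ and collecting powers of $N$ yields the finite Taylor expansion
\[
r(J_m(\lambda)) = \sum_{\ell = 0}^{m-1} \frac{r^{(\ell)}(\lambda)}{\ell!}\, N^\ell,
\]
which is precisely the upper-triangular pattern prescribed in Definition~\ref{def:mat_func_jordan} applied to the polynomial $r$. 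Because $I, N, \ldots, N^{m-1}$ are linearly independent, $r(J_m(\lambda)) = 0$ iff $r^{(\ell)}(\lambda) = 0$ for $\ell = 0, \ldots, m-1$.

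Combining the block-diagonal reduction with this Taylor identity, $r(A) = 0$ iff $r^{(\ell)}(\lambda_{j_k}) = 0$ for $\ell = 0, \ldots, m_i - 1$, for every Jordan block $J_{m_i}(\lambda_{j_k})$ of $A$. Since the largest block associated to $\lambda_j$ has size $n_j$ (its index), this is equivalent to $r^{(\ell)}(\lambda_j) = 0$ for $\ell = 0, \ldots, n_j - 1$ and all $j = 1, \ldots, N$, completing the proof. The main obstacle, modest as it is, is the Taylor-expansion step for $r(J_m(\lambda))$; once that structural formula is in hand, both directions of the biconditional follow simultaneously and the Jordan-form machinery does the rest.
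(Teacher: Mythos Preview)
Your proof is correct. It differs from the paper's argument, however: the paper sketches the proof in a single sentence via the minimal polynomial, observing that $p(A)=q(A)$ iff the minimal polynomial $\psi(z)=\prod_{j=1}^N (z-\lambda_j)^{n_j}$ divides $p-q$, which in turn is equivalent to $(p-q)^{(k)}(\lambda_j)=0$ for $k=0,\ldots,n_j-1$. Your route instead works directly with the Jordan form, explicitly computing $r(J_m(\lambda))$ as a Taylor polynomial in the nilpotent shift $N$ and reading off the vanishing conditions from the linear independence of $I,N,\ldots,N^{m-1}$. The minimal-polynomial argument is shorter and leans on a standard algebraic fact; your argument is more hands-on but entirely self-contained and has the virtue of tying in directly with the Jordan-form matrix function formula in Definition~\ref{def:mat_func_jordan}, which you essentially rederive for polynomials along the way.
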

The proof follows by noting that the minimal polynomial of $A$-- i.e., the polynomial $\psi$ of least degree such that $\psi(A) = 0$-- divides $p-q$, and consequences thereof.

Crucial for our methods and analysis is the Cauchy-Stieltjes integral definition.
\begin{definition} \label{def:mat_func_integral}
	Let $\mathbb{D} \subset \spC$ be a region, and suppose that $f: \mathbb{D} \to \spC$ is analytic with integral representation
	\begin{equation} \label{def:Cauchy-Stieltjes_func}
	f(z) = \int_{\Gamma} \frac{g(t)}{t-z} \d t, \quad  z \in \mathbb{D},
	\end{equation}
	with a path $\Gamma \subset \spC \setminus \mathbb{D}$ and function $g: \Gamma \to \spC$.  Further suppose that the spectrum of $A$ is contained in $\spC \setminus \mathbb{D}$.  Then we define
	\[
	f(A) := \int_{\Gamma} g(t)(tI - A)^\inv \d t.
	\]
\end{definition}
When $f$ is analytic, $g = \frac{1}{2 \pi i}f$, and $\Gamma$ is a contour enclosing the spectrum of $A$, then Definition~\ref{def:Cauchy-Stieltjes_func} reduces to the usual Cauchy integral definition.

Various matrix function properties will prove useful throughout our analysis.  Their proofs follow by examining the polynomial and Jordan form definitions of matrix functions.
\begin{theorem}[Theorem~1.13 in \cite{Higham2008}] \label{thm:mat_func_props}
	Let $A \in \spCnn$ and let $f$ be defined on the spectrum of $A$.  Then
	\begin{enumerate}[(i)]
		\item $f(A) A = A f(A)$;
		\item $f(A^T) = f(A)^T$;
		\item $f(X A X^\inv) = X f(A) X^\inv$; and
		\item $f(\lambda) \in \spec(f(A))$ for all $\lambda \in \spec(A)$.
	\end{enumerate}
\end{theorem}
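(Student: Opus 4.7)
The plan is to reduce everything possible to the Hermite polynomial definition of a matrix function (Definition~\ref{def:mat_func_Hermite}), and handle the spectral statement (iv) via the Jordan form definition (Definition~\ref{def:mat_func_jordan}). The point is that once $f(A) = p(A)$ for some scalar polynomial $p$, items (i)--(iii) collapse to elementary identities about polynomial evaluation, namely that any polynomial in $A$ commutes with $A$, that $(\cdot)^T$ and similarity transformations distribute over sums and powers, and hence over $p$.

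For (i), I would simply write $f(A) A = p(A) A = A p(A) = A f(A)$, since every monomial $A^k$ commutes with $A$. For (ii) and (iii), the Hermite polynomial $p$ associated to $f$ depends only on the values $f^{(k)}(\lambda_j)$ on the spectrum of $A$ and on the indices $n_j$, so the step that actually has content is to verify that $A^T$ and $A$ share the same eigenvalues with the same indices (which follows since $A$ and $A^T$ have the same Jordan structure up to transposing each block), and likewise that $XAX^\inv$ has identical Jordan data to $A$. Granted this, the same $p$ may be used for both sides, and (ii) reduces to $p(A^T) = p(A)^T$ (true term-by-term since $(A^T)^k = (A^k)^T$), while (iii) reduces to $p(XAX^\inv) = Xp(A)X^\inv$ (true because $(XAX^\inv)^k = XA^kX^\inv$ telescopes).

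For (iv), I would use the Jordan definition directly. Each block $f(J_{m_i}(\lambda_{j_k}))$ in Definition~\ref{def:mat_func_jordan} is upper triangular with the value $f(\lambda_{j_k})$ repeated along its diagonal. Consequently $f(J)$ is upper triangular with diagonal entries $\{f(\lambda_{j_k})\}$, and since $f(A) = X f(J) X^\inv$ is similar to $f(J)$, one has $\spec(f(A)) = \spec(f(J)) = \{f(\lambda_{j_k})\}$. Every $\lambda \in \spec(A)$ appears as some $\lambda_{j_k}$, so $f(\lambda) \in \spec(f(A))$, which is exactly the claim.

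The only step that is not entirely mechanical is the compatibility check underlying (ii) and (iii): I need to know that $A$ and $A^T$ (respectively $A$ and $XAX^\inv$) are defined simultaneously on the same spectral data and share the same Hermite interpolant. This is the main conceptual hurdle, but it is short: similarity preserves Jordan structure outright, and for transposition one can appeal to the fact that $J_m(\lambda)$ and $J_m(\lambda)^T$ are similar (via the reversal permutation), so $A$ and $A^T$ have identical eigenvalues and indices. Once this observation is in hand, Theorem~\ref{thm:polynomial_equivalence} guarantees that the polynomial identities used above are in fact equalities of matrix functions, completing the proof.
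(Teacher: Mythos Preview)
Your proposal is correct and matches the paper's own treatment: the paper does not give a detailed proof but simply states that the results ``follow by examining the polynomial and Jordan form definitions of matrix functions,'' which is precisely the strategy you outline (Hermite polynomial for (i)--(iii), Jordan form for (iv)). Your extra care in verifying that $A$, $A^T$, and $XAX^{-1}$ share the same spectral data so that the same interpolating polynomial applies is the right point to flag, and it is implicit in the paper's one-line justification.
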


\section{A definition for tensor functions} \label{sec:def_tensor_functions}
\newcommand{\imsize}{.13\textwidth}
\begin{figure}[htbp!]
	\begin{center}
		\begin{tabular}{c c c c c c}
			{\resizebox{\imsize}{!}{
					\includegraphics[width=\imsize]{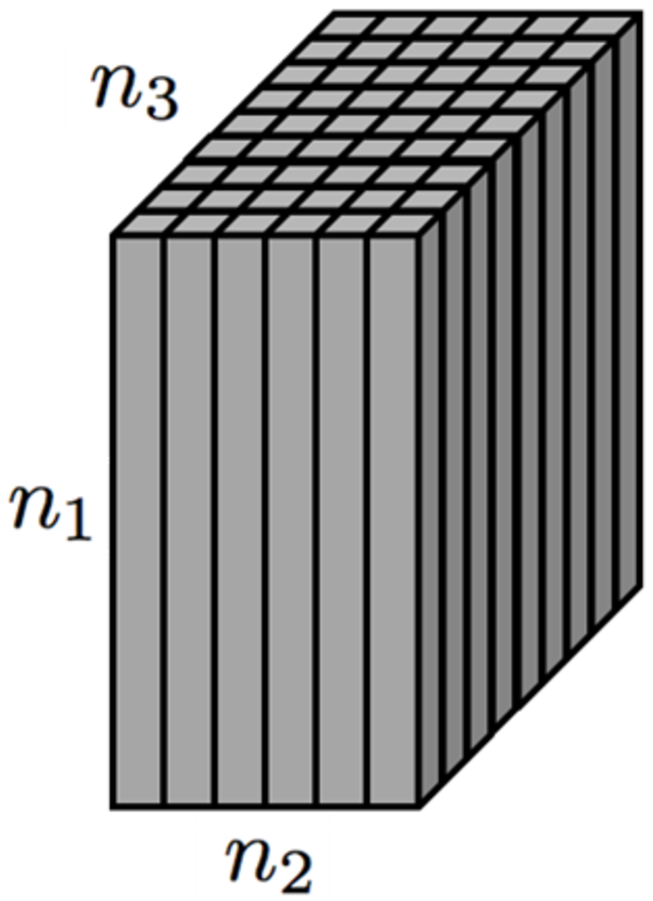}}} &
			
			{\resizebox{\imsize}{!}{
					\includegraphics[width=\imsize]{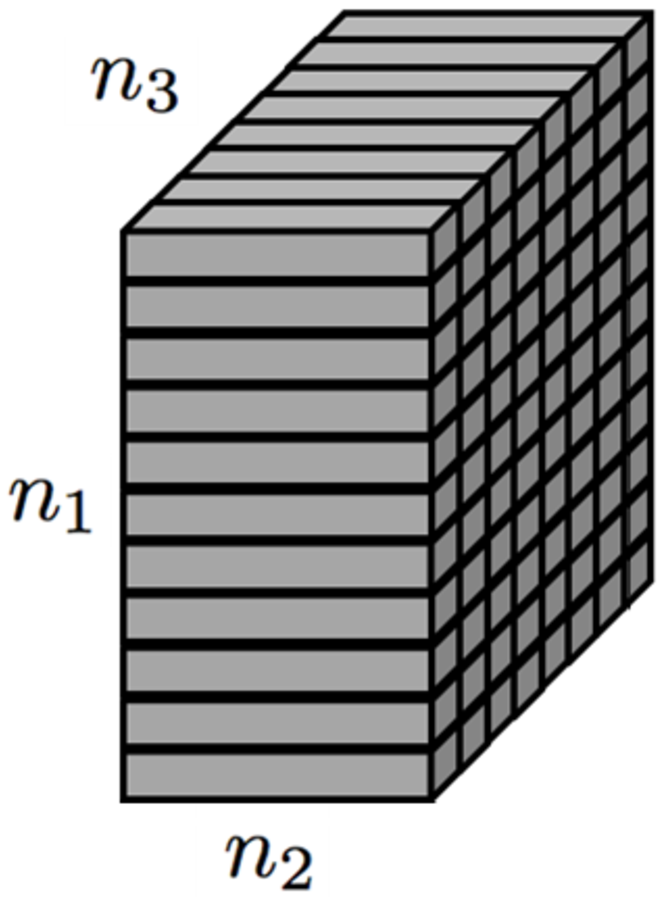}}} &
			
			{\resizebox{\imsize}{!}{
					\includegraphics[width=\imsize]{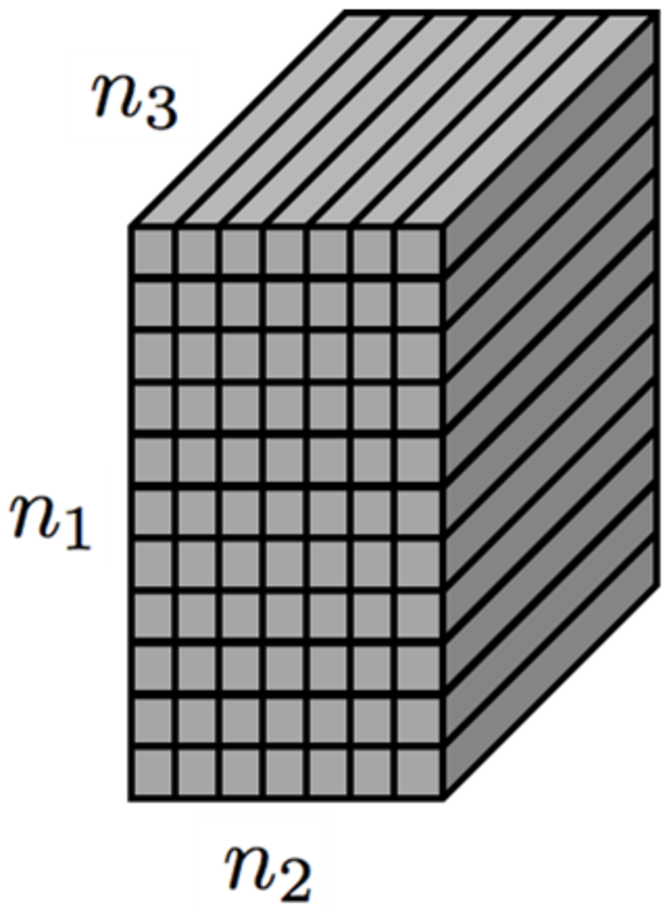}}} &
			
			{\resizebox{\imsize}{!}{
					\includegraphics[width=\imsize]{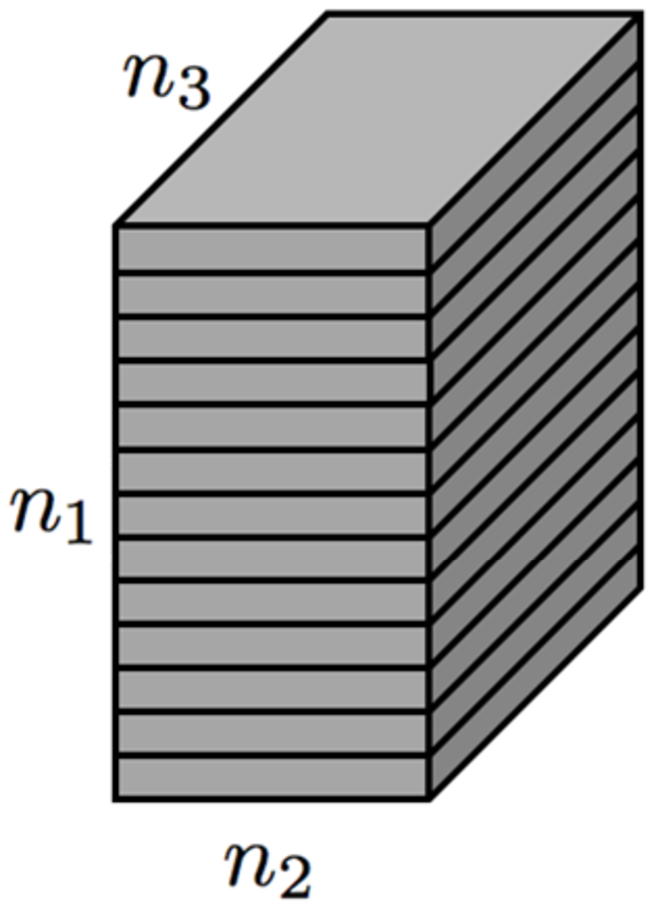}}} &
			
			{\resizebox{\imsize}{!}{
					\includegraphics[width=\imsize]{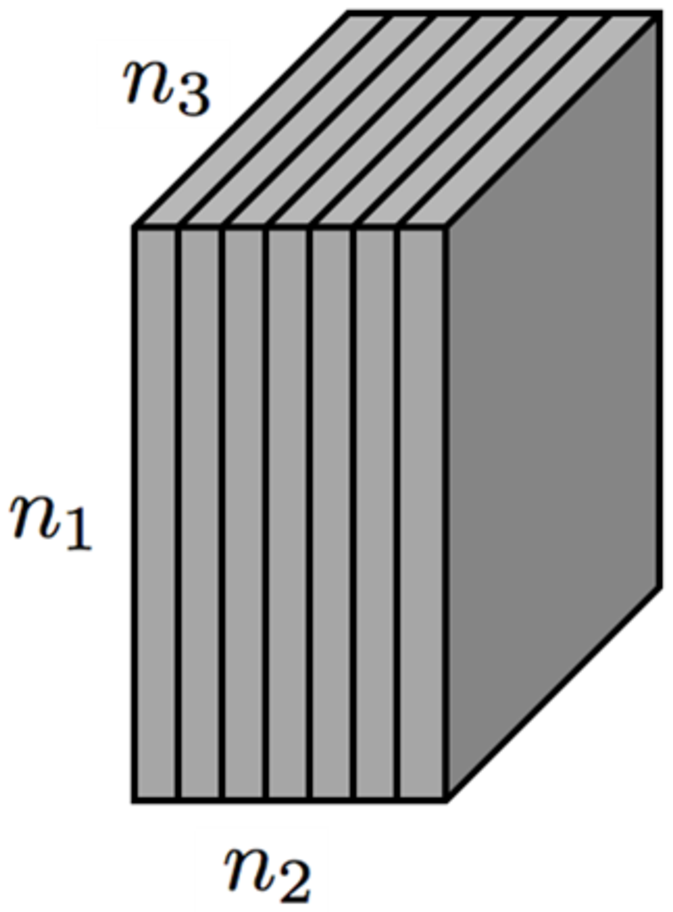}}} &
			
			{\resizebox{\imsize}{!}{
					\includegraphics[width=\imsize]{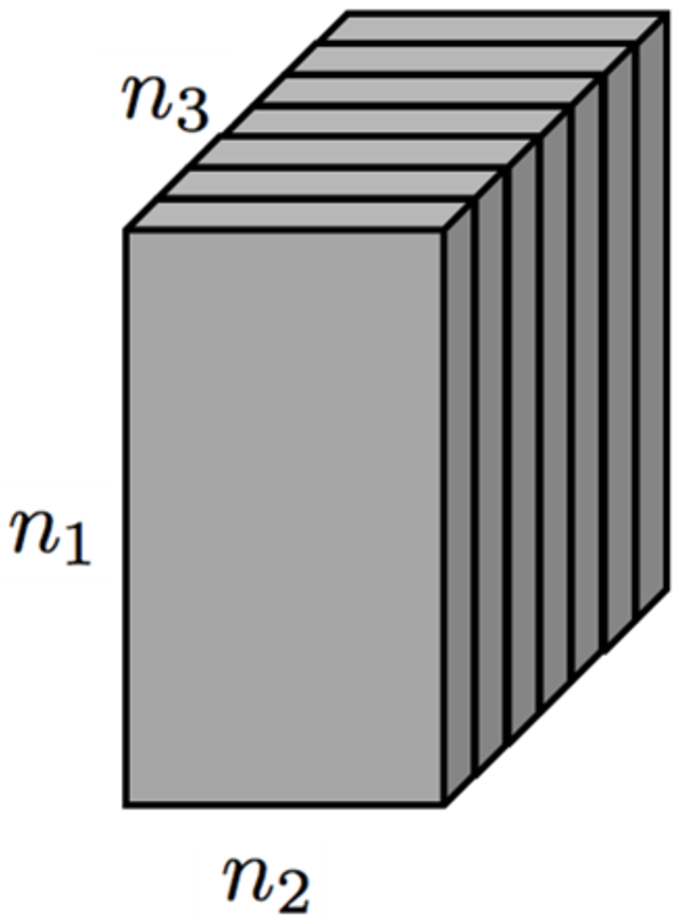}}} \\
			
			\small (a) & \small  (b) & \small  (c) & \small  (d) & \small  (e) & \small  (f)
		\end{tabular}
	\end{center}
	\caption{\small Different views of a third-order tensor $\AA \in \spCtensor$. (a) column fibers: $\AA(:,j,k)$; (b) row fibers: $\AA(i,:,k)$; (c) tube fibers: $\AA(i,j,:)$; (d) horizontal slices: $\AA(i,:,:)$; (e) lateral slices: $\AA(:,j,:)$; (f) frontal slices: $\AA(:,:,k)$ \label{fig:tensors}}
\end{figure}
We direct the reader now to Figure~\ref{fig:tensors} for different ``views" of a third-order tensor, which will be useful in visualizing the forthcoming concepts. We also make use of some notions from block matrices.  Define the standard block unit vectors $\vE_k^{np \times n} := \vehat_k^p \otimes I_{n \times n}$, where $\vehat_k^p \in \spC^p$ is the vector of all zeros except for the $k$th entry, and $I_{n \times n}$ is the identity in $\spCnn$.  When the dimensions are clear from context, we drop the superscripts.  See \eqref{eq:E1} for various ways of expressing $\vE_1^{np \times n}$.
\begin{equation} \label{eq:E1}
\vE_1^{np \times n}
=
\begin{bmatrix}
I_{n \times n} \\
0 \\
\vdots \\
0
\end{bmatrix}
=
\begin{bmatrix}
1 \\
0 \\
\vdots \\
0
\end{bmatrix}
\otimes
I_{n \times n}
=
\unfold{\II_{n \times n \times p}},
\end{equation}
where $\texttt{unfold}$ is defined shortly.

In \cite{Braman2010, KilmerBramanHaoHoover2013, KilmerMartin2011}, a paradigm is proposed for multiplying third-order tensors, based on viewing a tensor as a stack of frontal slices (as in Figure~\ref{fig:tensors}(f)).  We consider a tensor $\AA$ of size $m \times n \times p$ and $\BB$ of size $n \times s \times p$ and denote their frontal faces respectively as $A^{(k)}$ and $B^{(k)}$, $k = 1, \ldots, p$.  We also define the operations $\texttt{bcirc}$, $\texttt{unfold}$, $\texttt{fold}$, as
\begin{equation} \label{eq:bcircA}
\bcirc{\AA} :=
\begin{bmatrix}
A^{(1)}	& A^{(p)}	& A^{(p-1)}		& \cdots	& A^{(2)}	\\
A^{(2)}	& A^{(1)}	& A^{(p)}		& \cdots	& A^{(3)}	\\
\vdots	& \ddots	& \ddots		& \ddots	& \vdots	\\
A^{(p)}	& A^{(p-1)}	& \ddots		& A^{(2)}	& A^{(1)}
\end{bmatrix},
\end{equation}
\[
\unfold{\AA} :=
\begin{bmatrix}
A^{(1)}	\\
A^{(2)}	\\
\vdots	\\
A^{(p)}
\end{bmatrix}, \mbox{ and } \fold{\unfold{\AA}} := \AA.
\]
The {\em t-product} of two tensors $\AA$ and $\BB$ is then given as
\[
\AA * \BB := \fold{\bcirc{\AA} \unfold{\BB}}.
\]
Note that the operators \texttt{fold}, \texttt{unfold}, and \texttt{bcirc} are linear.

The notion of transposition is defined face-wise, i.e., $\AA^*$ is the $n \times m \times p$ tensor obtained by taking the conjugate transpose of each frontal slice of $\AA$ and then reversing the order of the second through $p$th transposed slices.

For tensors with $n \times n$ square faces, there is a tensor identity $\II_{n \times n \times p} \in \spC^{n \times n \times p}$, whose first frontal slice is the $n \times n$ identity matrix and whose remaining frontal slices are all the zero matrix.  With $\II_{n \times n \times p}$, One can then define the notion of an inverse with respect to the t-product.  Namely, $\AA, \BB \in \spC^{n \times n \times p}$ are inverses of each other if $\AA * \BB = \II_{n \times n \times p}$ and $\BB * \AA = \II_{n \times n \times p}$.  The t-product formalism further gives rise to its own notion of polynomials, with powers of tensors defined as $\AA^j := \underbrace{\AA * \cdots * \AA}_{j~\text{times}}$.

Assuming that $\AA \in \spC^{n \times n \times p}$ has diagonalizable faces, we can also define a tensor eigendecomposition.  That is, we have that $A^{(k)} = X^{(k)} D^{(k)} (X^{(k)})^\inv$, for all $k = 1, \ldots, p$, and define $\XX$ and $\DD$ to be the tensors whose faces are $X^{(k)}$ and $D^{(k)}$, respectively.  Then
\begin{equation} \label{def:t-eigendecomp}
\AA = \XX * \DD * \XX^\inv \mbox{ and } \AA * \XXarrow_i = \XXarrow_i * \vd_i,
\end{equation}
where $\XXarrow_i$ are the $n \times 1 \times p$ lateral slices of $\XX$ (see Figure~\ref{fig:tensors}(e) ) and $\vd_j$ are the $1 \times 1 \times p$ tubal fibers of $\DD$ (see Figure~\ref{fig:tensors}).  We say that $\DD$ is {\em f-diagonal}, i.e., that each of its frontal faces is a diagonal matrix.

The eigenvalue decomposition~\eqref{def:t-eigendecomp} is not unique.  See \cite{GleichGreifVarah2013} for an alternative circulant-based interpretation of third-order tensors, as well as a deeper exploration of a unique canonical eigendecomposition for tensors.

\subsection{The tensor t-exponential} \label{subsec:tensor_t-exp}
As motivation, we consider the solution to a multidimensional ordinary differential equation.  Suppose that $\AA$ has square frontal faces, i.e., that $\AA \in \spC^{n \times n \times p}$ and let $\BB: [0, \infty) \to \spC^{n \times s \times p}$ be an unknown function with $\BB(0)$ given.  With $\frac{\d}{\d t}$ acting element-wise, we consider the differential equation
\begin{equation} \label{eq:diffeq}
\frac{\d \BB}{\d t}(t) = \AA * \BB(t).
\end{equation}
Unfolding both sides leads to
\[
\frac{\d}{\d t}
\begin{bmatrix}
B^{(1)}(t) \\
\vdots \\
B^{(n)}(t)
\end{bmatrix}
=
\bcirc{\AA}
\begin{bmatrix}
B^{(1)}(t) \\
\vdots \\
B^{(n)}(t)
\end{bmatrix},
\]
whose solution can be expressed in terms of the matrix exponential as
\[
\begin{bmatrix}
B^{(1)}(t) \\
\vdots \\
B^{(n)}(t)
\end{bmatrix}
=
\exp(\bcirc{\AA} t)
\begin{bmatrix}
B^{(1)}(0) \\
\vdots \\
B^{(n)}(0)
\end{bmatrix}.
\]
Folding both sides again leads to the {\em tensor t-exponential},
\begin{equation} \label{eq:t-exp}
\BB(t) = \fold{\exp(\AA t) \unfold{\BB(0)}} =: \exp(\AA t) * \BB(0).
\end{equation}

\subsection{The tensor t-function} \label{subsec:tensor_t-function}
Using the tensor t-exponential as inspiration, we can define a more general notion for the scalar function $f$ of a tensor $\AA \in \spC^{n \times n \times p}$ multiplied by a tensor $\BB \in \spC^{n \times s \times p}$ as
\begin{equation} \label{def:fAABB}
\fAABB := \fold{f(\bcirc{\AA}) \cdot \unfold{\BB}},
\end{equation}
which we call the {\em tensor t-function}. Note that $f(\bcirc{\AA}) \cdot \unfold{\BB}$ is merely a matrix function times a block vector. If $\BB = \II_{n \times n \times p}$, then by equation~\eqref{eq:E1^npxn} the definition for $f(\AA)$ reduces to
\begin{equation} \label{def:fAA}
f(\AA) := \fold{f(\bcirc{\AA}) \vE_1^{np \times n}}.
\end{equation}
A natural question is whether the definition~\eqref{def:fAABB} behaves ``as expected" in common scenarios.  To answer this question, we require some results on block circulant matrices and the tensor t-product.
\begin{theorem}[Theorem 5.6.5 in \cite{Davis2012}] \label{thm:block_circ_space}
	Suppose $A, B \in \spC^{np \times np}$ are block circulant matrices with $n \times n$ blocks.  Let $\{\alpha_j\}_{j=1}^k$ be scalars.  Then $A^T$, $A^*$, $\alpha_1 A + \alpha_2 B$, $AB$, $q(A) = \sum_{j=1}^{k} \alpha_j A^j$, and $A^\inv$ (when it exists) are also block circulant.
\end{theorem}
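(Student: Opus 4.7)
The plan is to proceed via a commutativity characterization of block circulant matrices. Let $P \in \spC^{p \times p}$ denote the cyclic shift permutation satisfying $P \vehat_i = \vehat_{i+1 \bmod p}$, and set $\Pi := P \otimes I_n \in \spCNN$. First I would establish that a matrix $M \in \spCNN$ with $n \times n$ blocks is of the form \eqref{eq:bcircA} if and only if $\Pi M = M \Pi$. This is a direct block-wise calculation: left multiplication by $\Pi$ cyclically shifts the block rows of $M$, while right multiplication shifts the block columns, and these coincide precisely when the $(i,j)$ block depends only on $(i - j) \bmod p$, matching the pattern in \eqref{eq:bcircA}.

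With this characterization in hand, the closure properties collapse to one-line manipulations. For $\alpha_1 A + \alpha_2 B$ and for the product $A B$, one just substitutes the identities $\Pi A = A \Pi$ and $\Pi B = B \Pi$ and reads off $\Pi (\alpha_1 A + \alpha_2 B) = (\alpha_1 A + \alpha_2 B) \Pi$ and $\Pi (AB) = A \Pi B = (AB) \Pi$. The polynomial case $q(A) = \sum_{j=1}^{k} \alpha_j A^j$ then follows by a trivial induction on $j$ combined with the sum case. For $A^\inv$ (when it exists), multiplying $\Pi A = A \Pi$ by $A^\inv$ on both sides yields $A^\inv \Pi = \Pi A^\inv$, so $A^\inv$ commutes with $\Pi$ and is therefore block circulant.

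The transpose cases require one additional observation: since $P$ is a real permutation matrix, $\Pi^T = \Pi^* = \Pi^\inv$. Transposing the identity $\Pi A = A \Pi$ gives $A^T \Pi^\inv = \Pi^\inv A^T$; multiplying by $\Pi$ on the left and right then shows that $A^T$ commutes with $\Pi$ as well, so $A^T$ is block circulant. The conjugate transpose $A^*$ is handled by the same argument applied to $\Pi^*$, or equivalently by noting that entry-wise conjugation preserves the block circulant pattern and combining with the $A^T$ case.

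The only mildly delicate step is the initial index calculation establishing the commutativity characterization, where the modular arithmetic of the cyclic shift must be tracked carefully; once that is settled, the remaining verifications are formal and uniform, which is the chief attraction of this approach over a direct face-by-face bookkeeping argument against \eqref{eq:bcircA}.
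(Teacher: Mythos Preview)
Your argument is correct. The commutativity characterization---$M$ is block circulant with $n\times n$ blocks if and only if $M$ commutes with $\Pi = P \otimes I_n$---is the standard device, and once it is in place every closure statement is indeed a one-liner, exactly as you describe. The index computation you flag as ``mildly delicate'' is fine: with $P\vehat_i=\vehat_{i+1\bmod p}$, left multiplication by $\Pi$ sends block row $i$ to block row $i+1$ and right multiplication sends block column $j$ to block column $j-1$, and equating the two forces the $(i,j)$ block to depend only on $(i-j)\bmod p$.

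As for comparison: the paper does not supply its own proof of this theorem. It is quoted verbatim as Theorem~5.6.5 of Davis's monograph on circulant matrices and used as a black box. Your commutant approach is in fact the classical one used in that reference (block circulants form the commutant of the block shift, hence a $*$-subalgebra closed under inversion), so your proposal is both correct and aligned with the cited source even though the present paper itself offers nothing to compare against.
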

\begin{remark} \label{rem:bcirc_E1_rep}
	From \eqref{eq:bcircA}, we can see that any block circulant matrix $C \in \spC^{np \times np}$ can be represented by its first column $C \vE_1^{np \times n}$.  Let $\CC \in \spC^{n \times n \times p}$ be a tensor whose frontal faces are the block entries of $C \vE_1^{np \times n}$.  Then $\CC = \fold{C \vE_1^{np \times n}}$.
\end{remark}
\begin{lemma} \label{lem:bcirc_props}
	Let $\AA \in \spC^{m \times n \times p}$ and $\BB \in \spC^{n \times s \times p}$.  Then
	\begin{enumerate}[(i)]
		\item $\unfold{\AA} = \bcirc{\AA} \vE_1^{np \times n}$; \label{bcirc:trivial}
		\item $\bcirc{\fold{\bcirc{\AA} \vE_1^{np \times n}}} = \bcirc{\AA}$; \label{bcirc:fold}
		\item $\bcirc{\AA * \BB} = \bcirc{\AA} \bcirc{\BB}$; \label{bcirc:products}
		\item $\bcirc{\AA}^j = \bcirc{\AA^j}$, for all $j = 0, 1, \ldots$; and \label{bcirc:powers}
		\item $(\AA * \BB)^* = \BB^* * \AA^*$. \label{bcirc:transpose}
	\end{enumerate}
\end{lemma}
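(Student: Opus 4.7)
The plan is to prove the five items in order, exploiting two central facts: multiplication of a block circulant matrix by $\vE_1^{np \times n}$ extracts its first block column, and the operator $\texttt{bcirc}$ is injective since a block circulant matrix is uniquely determined by its first block column (Remark~\ref{rem:bcirc_E1_rep}). Combined with Theorem~\ref{thm:block_circ_space}, which guarantees that sums, products, transposes, and inverses of block circulant matrices remain block circulant, this lets me reduce every identity to an equality of first block columns.

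For \eqref{bcirc:trivial}, I would read off the first block column of the matrix displayed in~\eqref{eq:bcircA} and observe that it stacks $A^{(1)}, A^{(2)}, \ldots, A^{(p)}$, matching $\unfold{\AA}$. Item~\eqref{bcirc:fold} is then immediate: applying \eqref{bcirc:trivial}, $\fold{\bcirc{\AA}\vE_1^{np \times n}} = \fold{\unfold{\AA}} = \AA$, and taking \texttt{bcirc} of both sides closes the loop. For \eqref{bcirc:products}, I note that $\bcirc{\AA}\bcirc{\BB}$ is block circulant by Theorem~\ref{thm:block_circ_space}, and compute its first block column using \eqref{bcirc:trivial}:
\[
\bcirc{\AA}\bcirc{\BB}\vE_1^{np \times n} = \bcirc{\AA}\unfold{\BB} = \unfold{\AA * \BB},
\]
where the last equality is the definition of the t-product. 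Since $\bcirc{\AA*\BB}$ is also block circulant with the same first block column, Remark~\ref{rem:bcirc_E1_rep} gives equality. Item~\eqref{bcirc:powers} then follows by a straightforward induction on $j$, with \eqref{bcirc:products} driving the inductive step and the base case $j=0$ reducing to the tensor identity.

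For \eqref{bcirc:transpose}, the key preliminary observation is that $\bcirc{\AA^*} = \bcirc{\AA}^*$. This is precisely the reason the face-wise transpose is defined with a reversal of the second through $p$th frontal slices: conjugate-transposing the matrix in \eqref{eq:bcircA} yields a block circulant whose first block column reads $[(A^{(1)})^*, (A^{(p)})^*, \ldots, (A^{(2)})^*]^\top$, which is exactly $\unfold{\AA^*}$. Once this identity is in hand, I would chain
\[
\bcirc{(\AA*\BB)^*} = \bcirc{\AA*\BB}^* = (\bcirc{\AA}\bcirc{\BB})^* = \bcirc{\BB}^*\bcirc{\AA}^* = \bcirc{\BB^*}\bcirc{\AA^*} = \bcirc{\BB^* * \AA^*},
\]
using \eqref{bcirc:products} twice, and conclude by injectivity of \texttt{bcirc}.

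The main obstacle is the preliminary identity $\bcirc{\AA^*} = \bcirc{\AA}^*$ needed in \eqref{bcirc:transpose}; it is conceptually simple but requires a careful index check to confirm that the reversal built into the t-product transpose matches the index permutation produced by conjugate-transposing a block circulant. Everything else is routine once the ``first-column'' principle is established.
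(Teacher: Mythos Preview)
Your argument is correct and, for parts~\eqref{bcirc:trivial}--\eqref{bcirc:powers}, essentially identical to the paper's: both rely on Remark~\ref{rem:bcirc_E1_rep} (the first-block-column principle) together with Theorem~\ref{thm:block_circ_space} to reduce~\eqref{bcirc:products} to a first-column computation, and then obtain~\eqref{bcirc:powers} by induction.

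The only substantive difference is in part~\eqref{bcirc:transpose}. The paper does not prove it at all but simply cites \cite[Lemma~3.16]{KilmerMartin2011}. Your route---establishing the auxiliary identity $\bcirc{\AA^*}=\bcirc{\AA}^*$ by an index check and then chaining it with~\eqref{bcirc:products} and injectivity of \texttt{bcirc}---is a clean, self-contained proof that makes transparent why the t-transpose is defined with the reversal of slices $2$ through $p$. Your version is more informative for a reader who does not have the Kilmer--Martin paper at hand; the paper's citation is of course shorter.
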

\begin{proof}
	We drop the superscripts on $\vE_1^{np \times n}$ for ease of presentation. Parts~\eqref{bcirc:trivial} and \eqref{bcirc:fold} follow from Remark~\eqref{rem:bcirc_E1_rep}. To prove part~\eqref{bcirc:products}, we note by part~\eqref{bcirc:trivial} that
	\begin{align*}
	\bcirc{\AA * \BB}
	&= \bcirc{\fold{\bcirc{\AA} \unfold{\BB}}} \\
	&= \bcirc{\fold{\bcirc{\AA} \bcirc{\BB} \vE_1}}.
	\end{align*}
	Note that $\bcirc{\AA} \bcirc{\BB}$ is a block circulant matrix by Theorem~\ref{thm:block_circ_space}. Then by part~\eqref{bcirc:fold},
	\[
	\bcirc{\fold{\bcirc{\AA} \bcirc{\BB} \vE_1}} = \bcirc{\AA} \bcirc{\BB}.
	\]
	Part~\eqref{bcirc:powers} follows by induction on part~\eqref{bcirc:products}.  Part~\eqref{bcirc:transpose} is the same as~\cite[Lemma 3.16]{KilmerMartin2011}.
\end{proof}

Let $\DD$ be an $n \times n \times p$ {\em f-diagonal} tensor, i.e., a tensor whose $n \times n$ frontal slices are diagonal matrices.  Alternatively, one can think of such a tensor as an $n \times n$ matrix nonzero tube fibers on the diagonal, and zero tube fibers everywhere else.  (Reference Figure~\ref{fig:tensors}(c).)  The following theorem summarizes the relationship between the block circulant of $\DD$ and those of its tube fibers.
\begin{theorem} \label{thm:spec_fdiagonal_tubefibers}
	Let $\DD \in \spC^{n \times n \times p}$ be f-diagonal, and let $\{\vd_i\}_{i=1}^n \subset \spC^{1 \times 1 \times p}$ denote its diagonal tube fibers.  Then the spectrum of $\bcirc{\DD}$ is identical to the union of the spectra of $\bcirc{\vd_i}$, $i = 1, \ldots n$.
\end{theorem}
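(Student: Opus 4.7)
The plan is to show that $\bcirc{\DD}$ is similar, via a permutation, to a block diagonal matrix whose diagonal blocks are exactly the $\bcirc{\vd_i}$, $i=1,\dots,n$. Since similarity preserves the spectrum and a block diagonal matrix has spectrum equal to the union of the spectra of its blocks, the result follows.

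First, I would unpack the structure of $\bcirc{\DD}$. By the definition in \eqref{eq:bcircA}, $\bcirc{\DD}$ is an $np\times np$ matrix built from the $p$ frontal slices $D^{(1)},\dots,D^{(p)}$, each of which is an $n\times n$ diagonal matrix by f-diagonality. Writing $D^{(k)} = \diag(d_1^{(k)}, \dots, d_n^{(k)})$, the entry of $\bcirc{\DD}$ at block position $(r,c)$ and within-block position $(i,j)$ is nonzero only when $i=j$, and in that case equals $d_i^{(\sigma(r,c))}$ for the appropriate cyclic index $\sigma(r,c)\in\{1,\dots,p\}$ determined by \eqref{eq:bcircA}. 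On the other hand, $\vd_i\in\spC^{1\times 1\times p}$ is the tube fiber with frontal slices $d_i^{(1)},\dots,d_i^{(p)}$, so $\bcirc{\vd_i}$ is the $p\times p$ scalar circulant matrix whose $(r,c)$ entry is precisely $d_i^{(\sigma(r,c))}$.

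Next, I would introduce the perfect-shuffle permutation $P\in\spC^{np\times np}$ that reorders rows/columns by grouping together, for each $i=1,\dots,n$, all $p$ rows (respectively columns) corresponding to the $i$th diagonal position across the $p$ block rows (respectively block columns). Explicitly, $P$ can be written in terms of the Kronecker structure as the commutation matrix swapping the factors in $\spC^p\otimes\spC^n$. Applying this similarity,
\[
P\,\bcirc{\DD}\,P^T = \diag\bigl(\bcirc{\vd_1},\,\bcirc{\vd_2},\,\dots,\,\bcirc{\vd_n}\bigr),
\]
because the only entries of $\bcirc{\DD}$ that survive the diagonality constraint within each $n\times n$ block are those associated with a fixed index $i$, and these are exactly the entries of the scalar circulant $\bcirc{\vd_i}$.

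Finally, I would invoke the standard fact that the spectrum of a block diagonal matrix is the union of the spectra of its diagonal blocks, together with invariance of the spectrum under similarity, to conclude
\[
\spec(\bcirc{\DD}) \;=\; \spec\!\bigl(P\,\bcirc{\DD}\,P^T\bigr) \;=\; \bigcup_{i=1}^{n}\spec\bigl(\bcirc{\vd_i}\bigr).
\]
The only delicate step is producing and verifying the permutation $P$; once the bookkeeping on indices is set up correctly, the block-diagonal identification is immediate from the fact that each diagonal slot $i$ in the $n\times n$ blocks of $\bcirc{\DD}$ sees only the scalars coming from the tube fiber $\vd_i$, arranged in exactly the circulant pattern dictated by \eqref{eq:bcircA}.
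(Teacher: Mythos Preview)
Your argument is correct, but it follows a different path from the paper's. You block-diagonalize $\bcirc{\DD}$ via the perfect-shuffle (commutation) permutation, obtaining $P\,\bcirc{\DD}\,P^{T}=\diag(\bcirc{\vd_1},\ldots,\bcirc{\vd_n})$, and then read off the spectrum. The paper instead first writes $\bcirc{\DD}=\sum_{i=1}^{n}\bcirc{\vd_i}\otimes\Ihat_i$ (with $\Ihat_i=\ve_i\ve_i^{T}\in\spCnn$) and then applies the unitary similarity $F_p\otimes I_n$, using the mixed-product rule for Kronecker products together with the fact that $F_p$ diagonalizes every scalar circulant, to obtain a \emph{full} diagonalization $\sum_{i=1}^{n}\Lambda_i\otimes\Ihat_i$. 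Your permutation approach is more elementary---it needs no DFT and no Kronecker identities beyond the commutation matrix---and is entirely adequate for the spectrum statement as written. The paper's approach buys a little more: it produces an explicit unitary eigenvector matrix for $\bcirc{\DD}$ and ties the result directly to the DFT block-diagonalization exploited later in the computational section. The two are in fact close cousins: applying your commutation matrix to the paper's Kronecker expression yields exactly your block-diagonal form, since $K_{p,n}(\bcirc{\vd_i}\otimes\Ihat_i)K_{p,n}^{T}=\Ihat_i\otimes\bcirc{\vd_i}$.
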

\begin{proof}
\newcommand{\Dface}[1]{\begin{matrix} d_{1}^{(#1)} & & \\ & \ddots & \\ & & d_{n}^{(#1)} \end{matrix}}
\newcommand{\Dfacehl}[2]{\begin{matrix} \mathcolorbox{#1}{d_{1}^{(#2)}} & & \\ & \ddots & \\ & & d_{n}^{(#2)} \end{matrix}}
\newcommand{\Dfacehlfirst}[2]{\begin{matrix} \mathcolorbox{#1}{d_{1}^{(#2)}} & & \\ & \ddots & \\ & & 0 \end{matrix}}
\newcommand{\Delementhl}[2]{\mathcolorbox{#1}{d_{1}^{(#2)}}}
	We begin by deriving an expression for $\bcirc{\DD}$ in terms of the $p \times p$ circulant matrices $\bcirc{\vd_i}$.  Denote each slice as $D^{(k)}$, $k = 1, \ldots, p$, with diagonal entries denoted as $d_{i}^{(k)}$, for $i = 1, \ldots, n$; i.e.,
	\[
	D^{(k)}
	= \begin{bmatrix} d_{1}^{(k)} & & \\ & \ddots & \\ & & d_{n}^{(k)} \end{bmatrix}.
	\]
	
	Then we can express $\bcirc{\DD}$ as follows:
	\begin{align*}
	&\bcirc{\DD} \\
	&= \begin{bmatrix}
		D^{(1)}	& D^{(p)} & \cdots  & D^{(2)} \\
		D^{(2)} & D^{(1)} & \ddots  & \vdots \\
		\vdots  & \ddots  & \ddots  & D^{(p)} \\
		D^{(p)} & \cdots & D^{(2)} & D^{(1)} \\
	\end{bmatrix} \\
	&=
	\begin{bmatrix}
		\Dfacehl{SunsetOrangeLL}{1} &
		\Dfacehl{SunsetPurpleLL}{p} &
		\cdots &
		\Dfacehl{SunsetRedLL}{2} \\
		\Dfacehl{SunsetRedLL}{2} &
		\Dfacehl{SunsetOrangeLL}{1} &
		\ddots &
		\vdots \\
		\vdots &
		\ddots &
		\ddots &
		\Dfacehl{SunsetPurpleLL}{p} \\
		\Dfacehl{SunsetPurpleLL}{p} &
		\cdots &
		\Dfacehl{SunsetRedLL}{2} &
		\Dfacehl{SunsetOrangeLL}{1}
	\end{bmatrix}.
	\end{align*}
	Collecting the highlighted elements, note that the block circulant of the first tube fiber is given as
	\[
	\bcirc{\vd_1}
	=
	\begin{bmatrix}
	\Delementhl{SunsetOrangeLL}{1} &
	\Delementhl{SunsetPurpleLL}{p} &
	\cdots &
	\Delementhl{SunsetRedLL}{2} \\
	\Delementhl{SunsetRedLL}{2} &
	\Delementhl{SunsetOrangeLL}{1} &
	\ddots &
	\vdots \\
	\vdots &
	\ddots &
	\ddots &
	\Delementhl{SunsetPurpleLL}{p} \\
	\Delementhl{SunsetPurpleLL}{p} &
	\cdots &
	\Delementhl{SunsetRedLL}{2} &
	\Delementhl{SunsetOrangeLL}{1}
	\end{bmatrix}.
	\]
	Defining
	\newcommand{\Ihat}{\widehat{I}}
	\[
	\Ihat_1
	:=
	\begin{bmatrix}
	1	&	&		 &	\\
		& 0	&		 &	\\
		&	& \ddots &	\\
		&	&		 & 0
	\end{bmatrix}
	\in \spCnn,
	\]
	it holds that
	\begin{align*}
	& \bcirc{\vd_1} \otimes \Ihat_1 \\
	& =
	\begin{bmatrix}
	\Dfacehlfirst{SunsetOrangeLL}{1} &
	\Dfacehlfirst{SunsetPurpleLL}{p} &
	\cdots &
	\Dfacehlfirst{SunsetRedLL}{2} \\
	\Dfacehlfirst{SunsetRedLL}{2} &
	\Dfacehlfirst{SunsetOrangeLL}{1} &
	\ddots &
	\vdots \\
	\vdots &
	\ddots &
	\ddots &
	\Dfacehlfirst{SunsetPurpleLL}{p} \\
	\Dfacehlfirst{SunsetPurpleLL}{p} &
	\cdots &
	\Dfacehlfirst{SunsetRedLL}{2} &
	\Dfacehlfirst{SunsetOrangeLL}{1}
	\end{bmatrix}
	\end{align*}
	Noting the same pattern for each $i = 1, \ldots, n$, it is not hard to see that
	\begin{equation}
	\bcirc{\DD} = \sum_{i=1}^{n} \bcirc{\vd_i} \otimes \Ihat_i,
	\end{equation}
	where $\Ihat_i \in \spCnn$ is zero everywhere except for the $ii$th entry, which is one.
	
	It is known that a circulant matrix is unitarily diagonalizable by the discrete Fourier transform (DFT); see, e.g., \cite[Section 3.2]{Davis2012}.  Then for a $p \times p$ circulant matrix $C$, and with $F_p$ denoting the $p \times p$ DFT, $F_p^* C F_p = \Lambda$, where $\Lambda \in \spC^{p \times p}$ is diagonal.  Since each $\bcirc{\vd_i}$ is a $p \times p$ circulant matrix, there exists for each $i = 1, \ldots, n$ a diagonal $\Lambda_i \in \spC^{p \times p}$ such that
	\begin{equation} \label{eq:diagonalize_bcirc_tubes}
	F_p \bcirc{\vd_i} F_p = \Lambda_i.
	\end{equation}
	
	We also have the following useful property of the Kronecker product for matrices $A$, $B$, $C$, and $D$ such that the products $AC$ and $BD$ exist; see, e.g., \cite[Lemma 4.2.10]{HornJohnson1991}:
	\begin{equation} \label{eq:Kronecker_product}
	(A \otimes B) (C \otimes D) = (AB) \times (CD)
	\end{equation}
	
	Consequently,
	\begin{align*}
	(F_p \otimes I_{n \times n}) \bcirc{\DD} (F_p^* \otimes I_{n \times n})
	&= (F_p \otimes I_{n \times n}) \left(\sum_{i=1}^n \bcirc{\vd_i} \otimes \Ihat_i\right) (F_p^* \otimes I_{n \times n}) \\
	&= \sum_{i=1}^n (F_p \otimes I_{n \times n}) \bcirc{\vd_i} \otimes \Ihat_i (F_p^* \otimes I_{n \times n}) \\
	&= \sum_{i=1}^n (F_p \bcirc{\vd_i} F_p^*) \otimes (I_{n \times n} \Ihat_i I_{n \times n}), \mbox{ by \eqref{eq:Kronecker_product}} \\
	&= \sum_{i=1}^n \Lambda_i \otimes \Ihat_i, \mbox{ by \eqref{eq:diagonalize_bcirc_tubes}}.
	\end{align*}
	Noting that $F_p \otimes I_{n \times n}$ is unitary and that the matrix $\Lambda := \sum_{i=1}^n \Lambda_i \otimes \Ihat_i$ is a diagonal matrix whose entries are precisely the diagonal entries of all the $\Lambda_i$ concludes the proof.
\end{proof}

\begin{corollary} \label{cor:spec_fdiagonal_tubefibers}
	Let $\DD \in \spC^{n \times n \times p}$ be f-diagonal, and let $\{\vd_i\}_{i=1}^n \subset \spC^{1 \times 1 \times p}$ denote its diagonal tube fibers.  Then a function $f$ being defined on the spectrum of $\bcirc{\DD}$ is equivalent to $f$ being defined on the union of the spectra of $\bcirc{\vd_i}$, $i = 1, \ldots, n$.
\end{corollary}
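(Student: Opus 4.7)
The plan is to leverage Theorem~\ref{thm:spec_fdiagonal_tubefibers} together with the diagonalizability structure uncovered in its proof. Recall that "$f$ defined on the spectrum of $M$" means that $f^{(k)}(\lambda)$ exists for $k = 0, \ldots, n_\lambda - 1$ and each distinct $\lambda \in \spec(M)$, where $n_\lambda$ is the index of $\lambda$. Thus the corollary has two ingredients: equality of the underlying eigenvalue sets, and agreement of the corresponding indices. The first is exactly the content of Theorem~\ref{thm:spec_fdiagonal_tubefibers}. The second will follow from the observation that every matrix in sight is in fact diagonalizable, so all indices collapse to $1$ and only the function values $f(\lambda)$ need exist.

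More concretely, I would first invoke the proof of Theorem~\ref{thm:spec_fdiagonal_tubefibers}: each $\bcirc{\vd_i}$ is a $p \times p$ circulant matrix, hence unitarily diagonalized by the DFT matrix $F_p$, so every eigenvalue of $\bcirc{\vd_i}$ has index $1$. Likewise, the identity
\[
(F_p \otimes I_{n \times n})\,\bcirc{\DD}\,(F_p^* \otimes I_{n \times n}) = \Lambda
\]
derived in that proof, with $\Lambda$ diagonal and $F_p \otimes I_{n \times n}$ unitary, shows that $\bcirc{\DD}$ is unitarily diagonalizable as well. Consequently every eigenvalue of $\bcirc{\DD}$ also has index $1$.

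With all indices equal to $1$, the condition "$f$ is defined on $\spec(\bcirc{\DD})$" reduces to the existence of $f(\lambda)$ for each distinct $\lambda \in \spec(\bcirc{\DD})$, and analogously for each $\bcirc{\vd_i}$. By Theorem~\ref{thm:spec_fdiagonal_tubefibers}, $\spec(\bcirc{\DD}) = \bigcup_{i=1}^n \spec(\bcirc{\vd_i})$, so $f(\lambda)$ exists for every $\lambda$ in the former set if and only if it exists for every $\lambda$ in the latter union, which is equivalent to $f$ being defined on $\spec(\bcirc{\vd_i})$ for each $i = 1, \ldots, n$.

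There is no real obstacle here; the corollary is essentially a direct reading of the theorem once one notices that the unitary similarities established in its proof force every Jordan block to be $1 \times 1$. The only point requiring a little care is making explicit that "defined on the spectrum" is an index-sensitive notion in general, and then verifying that indices do not cause trouble in this specific diagonalizable setting.
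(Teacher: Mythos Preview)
Your proposal is correct and follows the same approach as the paper, which simply records the corollary as an immediate consequence of Theorem~\ref{thm:spec_fdiagonal_tubefibers} without further argument. If anything, your write-up is more careful than the paper's: you explicitly address the index-sensitivity of ``defined on the spectrum'' and use the unitary diagonalizations from the proof of the theorem to reduce all indices to~$1$, a point the paper leaves implicit.
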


An immediate consequence of Theorem~\ref{thm:spec_fdiagonal_tubefibers} is that a function $f$ being defined on the spectrum of $\bcirc{\DD}$ is equivalent to $f$ being defined on the spectra of $\bcirc{\vd_i}$, $i = 1, \ldots, n$.  The interpolating polynomials for $f(\bcirc{\DD})$ and $f(\bcirc{\vd_i})$, $i = 1, \ldots, n$, are also related.

The following theorem ensures that definition~\eqref{def:fAABB} is well defined when $f$ is a polynomial, when $\AA$ and $\BB$ are second-order tensors (i.e., matrices), and when $f$ is the inverse function.
\begin{theorem} \label{thm:tensor_t-func_consistency}
	Let $\AA \in \spC^{n \times n \times p}$ and $\BB \in \spC^{n \times s \times p}$.
	\begin{enumerate}[(i)]
		\item If $f \equiv q$, where $q$ is a polynomial, then the tensor t-function definition \eqref{def:fAABB} matches the polynomial notion in the t-product formalism, i.e., \label{consistency:poly}
		\[
		\fold{q(\bcirc{\AA}) \cdot \unfold{\BB}} = \fold{\bcirc{q(\AA)} \cdot \unfold{\BB}}.
		\] 
		\item Let $q$ be the scalar polynomial guaranteed by Definition~\ref{def:mat_func_Hermite} so that \linebreak$f(\bcirc{\AA}) = q(\bcirc{\AA})$.  Then $\fAABB = q(\AA) * \BB$. \label{consistency:Hermite}
		\item If $\AA$ is a matrix and $\BB$ a block vector (i.e., if $p = 1$), then $\fAABB$ reduces to the usual matrix function definition. \label{consistency:mat_func}
		\item If $f(z) = z^\inv$, then $f(\AA) * \AA = \AA * f(\AA) = \II_{n \times n \times p}$. \label{consistency:inverse}
	\end{enumerate}
\end{theorem}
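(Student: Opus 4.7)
The plan is to handle the four consistency properties in sequence, pushing the action of $\bcirc{\cdot}$ through the polynomial or analytic operation using Lemma~\ref{lem:bcirc_props} and Theorem~\ref{thm:block_circ_space}. For part~\eqref{consistency:poly}, I would write $q(z) = \sum_{k=0}^{m} c_k z^k$ and expand
\[
q(\bcirc{\AA}) = c_0 I_{np \times np} + \sum_{k=1}^{m} c_k \bcirc{\AA}^k.
\]
Lemma~\ref{lem:bcirc_props}\eqref{bcirc:powers} applied termwise rewrites each $\bcirc{\AA}^k$ as $\bcirc{\AA^k}$; direct inspection of~\eqref{eq:bcircA} gives $\bcirc{\II_{n \times n \times p}} = I_{np \times np}$; and the linearity of $\bcirc{\cdot}$ noted just after the definition of the t-product allows me to collect the sum into $\bcirc{q(\AA)}$. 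Substituting into~\eqref{def:fAABB} delivers the claim.

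Part~\eqref{consistency:Hermite} is then immediate: Definition~\ref{def:mat_func_Hermite} gives $f(\bcirc{\AA}) = q(\bcirc{\AA})$, part~\eqref{consistency:poly} rewrites this as $\bcirc{q(\AA)}$, and the definition of the t-product identifies $\fold{\bcirc{q(\AA)} \unfold{\BB}}$ with $q(\AA) * \BB$. Part~\eqref{consistency:mat_func} is essentially cosmetic: when $p = 1$, the operators $\bcirc{\cdot}$, $\unfold{\cdot}$, and $\fold{\cdot}$ reduce to the identity on matrices and block vectors, so~\eqref{def:fAABB} collapses directly to $f(A) B$, the standard matrix function acting on a block vector.

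Part~\eqref{consistency:inverse} is the main obstacle, since the tensor $f(\AA)$ is produced by the generic recipe~\eqref{def:fAA} and must be converted into something whose t-product with $\AA$ can actually be computed. The key step is to observe that Theorem~\ref{thm:block_circ_space} guarantees $f(\bcirc{\AA}) = \bcirc{\AA}^{-1}$ is itself block circulant; Remark~\ref{rem:bcirc_E1_rep} then tells me that folding its first block column recovers a tensor whose own block circulant returns $\bcirc{\AA}^{-1}$, i.e., $\bcirc{f(\AA)} = \bcirc{\AA}^{-1}$. Once this identification is in hand, a short computation using Lemma~\ref{lem:bcirc_props}\eqref{bcirc:trivial} closes the argument:
\[
f(\AA) * \AA = \fold{\bcirc{f(\AA)} \unfold{\AA}} = \fold{\bcirc{\AA}^{-1} \bcirc{\AA} \vE_1^{np \times n}} = \fold{\vE_1^{np \times n}} = \II_{n \times n \times p},
\]
where the final equality is~\eqref{eq:E1}, and the symmetric identity $\AA * f(\AA) = \II_{n \times n \times p}$ follows by the same manipulation with the factors swapped.
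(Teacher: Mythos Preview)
Your proof is correct and follows essentially the same route as the paper's: Lemma~\ref{lem:bcirc_props}\eqref{bcirc:powers} plus linearity for part~\eqref{consistency:poly}, the collapse of $\bcirc{\cdot}$, $\unfold{\cdot}$, $\fold{\cdot}$ to identities for part~\eqref{consistency:mat_func}, and the reduction $\bcirc{f(\AA)} = \bcirc{\AA}^{-1}$ followed by Lemma~\ref{lem:bcirc_props}\eqref{bcirc:trivial} for part~\eqref{consistency:inverse}. If anything you are slightly more explicit than the paper, which leaves the identification $\bcirc{f(\AA)} = \bcirc{\AA}^{-1}$ implicit and omits the constant term of the polynomial in part~\eqref{consistency:poly}.
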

\begin{proof}
	For part~\eqref{consistency:poly}, let $q(z) = \sum_{j=1}^{m} c_j z^j$.  Then by Lemma~\ref{lem:bcirc_props}\eqref{bcirc:powers} and the linearity of $\texttt{fold}$, we have that
	\vspace{-8pt}
	\begin{align*}
	\fold{q(\bcirc{\AA}) \cdot \unfold{\BB}}
	&= \fold{\sum_{j=1}^{m} c_j \bcirc{\AA}^j \cdot \unfold{\BB}} \\
	&= \sum_{j=1}^{m} c_j \fold{\bcirc{\AA^j} \cdot \unfold{\BB}} \\
	&= \sum_{j=1}^{m} c_j \bcirc{\AA^j} * \BB \\
	&= \fold{\bcirc{q(\AA)} \cdot \unfold{\BB}}.
	\end{align*}
	Part~\eqref{consistency:Hermite} is a special case of part~\eqref{consistency:poly}.
	As for part~\eqref{consistency:mat_func}, since $p=1$, we have that $\fold{\AA} = \bcirc{\AA} = \AA = \unfold{\AA}$, and similarly for $\BB$.  Then the definition of $\fAABB$ reduces immediately to the matrix function case.
	Part~\eqref{consistency:inverse} follows by carefully unwrapping the definition of $f(\AA)$:
	\begin{align*}
	f(\AA) * \AA
	&= \fold{\bcirc{\AA}^\inv \unfold{\AA}} \\
	&= \fold{\bcirc{\AA}^\inv \bcirc{\AA} \vE_1^{np \times n}}, \mbox{by Lemma~\ref{lem:bcirc_props}\eqref{bcirc:trivial}} \\
	&= \fold{\vE_1^{np \times n}} = \II_{n \times n \times p}.
	\end{align*}
	Likewise with the other product:
	\begin{align*}
	\AA * f(\AA)
	&= \fold{\bcirc{\AA} \unfold{\fold{\bcirc{\AA}^\inv \unfold{\II_{n \times n \times p}}}}} \\
	&= \fold{\bcirc{\AA} \bcirc{\AA}^\inv \vE_1^{np \times n}} \\
	&= \fold{\vE_1^{np \times n}} = \II_{n \times n \times p}.
	\end{align*}
\end{proof}

The definition~\eqref{def:fAABB} possesses generalized versions of many of the core properties of matrix functions.
\begin{theorem} \label{thm:tensor_t-func_props}
	Let $\AA \in \spC^{n \times n \times p}$, and let $f: \spC \to \spC$ be defined on a region in the complex plane containing the spectrum of $\bcirc{\AA}$.  For part~(iv), assume that $\AA$ has an eigendecomposition as in equation~\eqref{def:t-eigendecomp}, with $\AA * \XXarrow_i = \DD * \XXarrow_i = \XXarrow_i * \vd_i$, $i = 1, \ldots, n$. Then it holds that
	\begin{enumerate}[(i)]
		\item $f(\AA)$ commutes with $\AA$; \label{tfunc:commutativity}
		\item $f(\AA^*) = f(\AA)^*$; \label{tfunc:transpose}
		\item $f(\XX * \AA * \XX^\inv) = \XX f(\AA) \XX^\inv$; and \label{tfunc:similarity}
		\item $f(\DD) * \XXarrow_i = \XXarrow_i * f(\vd_i)$, for all $i = 1, \ldots, n$. \label{tfunc:diagonal}
	\end{enumerate}
\end{theorem}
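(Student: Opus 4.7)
The strategy for parts~\eqref{tfunc:commutativity}--\eqref{tfunc:similarity} is uniform: lift each identity to a statement about $np \times np$ block circulant matrices, apply the corresponding item of Theorem~\ref{thm:mat_func_props}, and then descend via $\fold$. The linchpin is the identity $\bcirc{f(\AA)} = f(\bcirc{\AA})$, which I establish first. Definition~\ref{def:mat_func_Hermite} produces a polynomial $q$ with $f(\bcirc{\AA}) = q(\bcirc{\AA})$, so Theorem~\ref{thm:block_circ_space} shows that $f(\bcirc{\AA})$ is itself block circulant; Remark~\ref{rem:bcirc_E1_rep} combined with~\eqref{def:fAA} then identifies this matrix with $\bcirc{f(\AA)}$.

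With that identity in hand, part~\eqref{tfunc:commutativity} is immediate: Lemma~\ref{lem:bcirc_props}\eqref{bcirc:products} yields $\bcirc{f(\AA) * \AA} = f(\bcirc{\AA}) \bcirc{\AA}$ and $\bcirc{\AA * f(\AA)} = \bcirc{\AA} f(\bcirc{\AA})$, which agree by Theorem~\ref{thm:mat_func_props}(i); injectivity of $\bcirc$ finishes the job. For part~\eqref{tfunc:transpose}, a direct check using the defining cyclic reversal of $\AA^*$ shows $\bcirc{\AA^*} = \bcirc{\AA}^*$, after which Theorem~\ref{thm:mat_func_props}(ii) (in its conjugate-transpose formulation) and folding close the argument. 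Part~\eqref{tfunc:similarity} uses Lemma~\ref{lem:bcirc_props}\eqref{bcirc:products} twice together with $\bcirc{\XX^\inv} = \bcirc{\XX}^\inv$ (obtained by applying $\bcirc$ to $\XX * \XX^\inv = \II_{n \times n \times p}$) to reduce the identity to Theorem~\ref{thm:mat_func_props}(iii) at the block circulant level.

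Part~\eqref{tfunc:diagonal} requires a different route since $\XXarrow_i$ is not square. A straightforward induction on $j$, using associativity of $*$ and the hypothesis $\DD * \XXarrow_i = \XXarrow_i * \vd_i$, gives $\DD^j * \XXarrow_i = \XXarrow_i * \vd_i^j$ for all $j \geq 0$, which extends linearly to $p(\DD) * \XXarrow_i = \XXarrow_i * p(\vd_i)$ for every scalar polynomial $p$. Now I choose $p$ to be the Hermite interpolant of $f$ at $\spec(\bcirc{\DD})$, so that $p(\bcirc{\DD}) = f(\bcirc{\DD})$ by Definition~\ref{def:mat_func_Hermite}, and consequently $p(\DD) = f(\DD)$ by Theorem~\ref{thm:tensor_t-func_consistency}\eqref{consistency:poly}. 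Corollary~\ref{cor:spec_fdiagonal_tubefibers} guarantees $\spec(\bcirc{\vd_i}) \subseteq \spec(\bcirc{\DD})$, so the same $p$ matches $f$ on $\spec(\bcirc{\vd_i})$; Theorem~\ref{thm:polynomial_equivalence} then gives $p(\bcirc{\vd_i}) = f(\bcirc{\vd_i})$, hence $p(\vd_i) = f(\vd_i)$ after folding. Assembling these pieces yields $f(\DD) * \XXarrow_i = p(\DD) * \XXarrow_i = \XXarrow_i * p(\vd_i) = \XXarrow_i * f(\vd_i)$.

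The main obstacle is part~\eqref{tfunc:transpose}: Theorem~\ref{thm:mat_func_props}(ii) is phrased for plain transpose, whereas the tensor transpose mixes face-wise conjugate transposition with a cyclic reversal, so one must first verify the block-circulant compatibility $\bcirc{\AA^*} = \bcirc{\AA}^*$ by unwinding the two definitions and then invoke the conjugate-transpose variant of Theorem~\ref{thm:mat_func_props}(ii), which holds under the mild assumption that $f$ takes conjugate values on conjugate points.
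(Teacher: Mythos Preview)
Your proposal is correct, and for part~\eqref{tfunc:diagonal} it coincides with the paper's argument essentially line for line. For parts~\eqref{tfunc:commutativity}--\eqref{tfunc:similarity}, however, you take a genuinely different route. The paper invokes Theorem~\ref{thm:tensor_t-func_consistency}\eqref{consistency:Hermite} to replace $f$ by its Hermite interpolating polynomial and then verifies each identity \emph{at the tensor level} by induction on polynomial degree: part~\eqref{tfunc:transpose} reduces to $(\AA^j)^* = (\AA^*)^j$ via Lemma~\ref{lem:bcirc_props}\eqref{bcirc:transpose}, and part~\eqref{tfunc:similarity} reduces to $(\XX * \AA * \XX^\inv)^j = \XX * \AA^j * \XX^\inv$. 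You instead isolate the structural identity $\bcirc{f(\AA)} = f(\bcirc{\AA})$ once and for all, and then transport each statement to the $np \times np$ block circulant level where Theorem~\ref{thm:mat_func_props} applies verbatim. Your approach is more modular---any further matrix-function property transfers automatically via the same lift---while the paper's is more self-contained within the t-product algebra and avoids appealing to injectivity of $\bcirc$. Both are short. It is also worth noting that you flag explicitly the need for $f(\bar z) = \overline{f(z)}$ in part~\eqref{tfunc:transpose}, a hypothesis the paper's inductive argument tacitly requires as well (since the interpolating polynomial may have complex coefficients) but does not state.
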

\begin{proof}
	For parts~\eqref{tfunc:commutativity}-\eqref{tfunc:similarity}, it suffices by Theorem~\ref{thm:tensor_t-func_consistency}\eqref{consistency:Hermite} to show that the statements hold for $f(z) = \sum_{j=1}^{m} c_j z^j$.
	Part~\eqref{tfunc:commutativity} then follows immediately.
	To prove part~\eqref{tfunc:transpose}, we need only show that $(\AA^j)^* = (\AA^*)^j$ for all $j = 0, 1, \ldots$, which follows by induction from  Lemma~\ref{lem:bcirc_props}\eqref{bcirc:transpose}.
	Part~\eqref{tfunc:similarity} also follows inductively.  The base cases $j = 0, 1$ clearly hold.  Assume for some $j = k$, $(\XX * \AA * \XX^\inv)^k = \XX (\AA)^k \XX^\inv$, and then note that
	\begin{align*}
	(\XX * \AA * \XX^\inv)^{k+1}
	&= (\XX * \AA * \XX^\inv)^k * (\XX * \AA * \XX^\inv)\\
	&= \XX * (\AA)^k * \XX^\inv * \XX * \AA * \XX^\inv = \XX * (\AA)^{k+1} * \XX^\inv.
	\end{align*}
	
	For part~\eqref{tfunc:diagonal}, we fix $i \in \{i, \ldots, n \}$. By Corollary~\ref{cor:spec_fdiagonal_tubefibers}, $f$ being defined on $\spec(\bcirc{\DD})$ implies that it is also defined on $\spec(\bcirc{\vd_i})$.  Let $q$ and $q_i$ be the polynomials guaranteed by Theorem~\ref{def:mat_func_Hermite} such that $f(\bcirc{\DD}) = q(\bcirc{\DD})$ and $f(\bcirc{\vd_i}) = q_i(\bcirc{\vd_i})$.  By Theorem~\ref{thm:spec_fdiagonal_tubefibers}, $\spec(\bcirc{\vd_i}) \subset \spec(\bcirc{\DD})$, so by Theorem~\ref{thm:spec_fdiagonal_tubefibers}, it follows that $q_i(\bcirc{\vd_i}) = q(\bcirc{\vd_i})$.  Then it suffices to prove part~\eqref{tfunc:diagonal} for $\DD^j$, $j = 1, 0, \ldots$.  The cases $j = 0, 1$ clearly hold, and we assume the statement holds for some $j = k \geq 1$.  Then
	\[
	\DD^{k+1} * \XXarrow_i = \DD * (\DD^k * \XXarrow_i) = \DD * \XXarrow_i * \vd_i^k = \XXarrow_i * \vd_i^{k+1}.
	\]
\end{proof}

\begin{remark} \label{rem:tfunc_eigendecomp}
	{\rm
		When $\AA$ has an eigendecomposition $\XX * \DD * \XX^\inv$ as in \eqref{def:t-eigendecomp}, then by Theorems~\ref{thm:spec_fdiagonal_tubefibers} and \ref{thm:tensor_t-func_props}, an equivalent definition for $f(\AA)$ is given as
		\[
		f(\AA) = \XX *
		\begin{bmatrix}
		f(\vd_1)	&			&		\\
		& \ddots	&		\\
		&			& f(\vd_n)
		\end{bmatrix}
		* \XX^\inv,
		\]
		where the inner matrix should be regarded three-dimensionally, with its elements being tube fibers (\cf Figure~\ref{fig:tensors}(c)).
	}
\end{remark}

\section{Centrality and communicability of a third-order network} \label{sec:network_analysis}
We use the term {\em network} to denote an undirected, unweighted graph with $n$ nodes.  The graph, and by extension, the network, can be represented by its {\em adjacency matrix} $A \in \spR^{n \times n}$.  The $ij$th entry of $A$ is 1 if nodes $i$ and $j$ are connected, and 0 otherwise.  As a rule, a node is not connected to itself, so $A_{ii} = 0$.  The centrality of the $i$th node is defined as $\exp(A)_{ii}$, while the communicability between nodes $i$ and $j$ is defined as $\exp(A)_{ij}$.

These notions can be extended to higher-order situations.  Suppose we are concerned instead about triplets, instead of pairs, of nodes.  Then it is possible to construct an adjacency tensor $\AA$, where a 1 at entry $\AA_{ijk}$ indicates that distinct nodes $i$, $j$, and $k$ are connected and 0 otherwise.  Information will, however, be lost if only the adjacency tensor is considered, since pairwise connectivity is stored in the adjacency matrix.  Multilayer networks, such as those describing a city's bus, metro, and tram systems, constitute a more natural application, and several notions of centrality are explored in \cite{DeDomenicoSoleRibaltaOmodei2013}.  Alternatively, it is not hard to imagine a time-dependent network stored as a tensor, where each frontal face corresponds to a sampling of the network at discrete times; see, e.g., \cite{ZhangElyAeron2014}.  In any of these situations, we could compute the communicability of a triple as $\exp(\AA)_{ijk}$, where $\exp(\AA)$ is our tensor t-exponential.  Centrality for a node $i$ would thus be defined as $\exp(\AA)_{iii}$.

\section{Computing the tensor t-function} \label{sec:methods_compute}
For the tensor t-function to be viable in practice, we need efficient methods for approximating $\fAABB$ numerically.  The t-eigendecomposition and t-Krylov methods of \cite{KilmerBramanHaoHoover2013} are potential options, but a full eigendecomposition may be expensive to compute for large tensors, and crafting t-Krylov methods for tensor functions may not be necessary, given the equivalence between $\fAABB$ and the $\fAB$ problem.  We therefore adapt the block Krylov subspace methods (KSMs) of \cite{FrommerLundSzyld2017}, designed specifically for such problems, and further optimize the computations by taking advantage of the block circulant structure of $\bcirc{\AA}$.

\subsection{Block Krylov subspace methods for matrix functions} \label{subsec:block_Krylov_framework}
We recount here the block Krylov subspace framework from \cite{FrommerLundSzyld2017}, given a scalar function $f$ defined on an $np \times np$ matrix $A$ and a block vector $\vB \in \spCNn$.  This framework allows us to treat different inner products and norms simultaneously, some of which are computationally more advantageous than others; see Section~\ref{subsec:complexity}.

Let $\spS \subset \spCnn$ be a *-subalgebra with identity.
\begin{definition} \label{def:block_IPS}
	A mapping $\IPSnoarg$ from $\spCNn \times \spCNn$ to $\spS$ is called a {\em block inner product onto $\spS$} if it satisfies the following conditions for all $\vX,\vY,\vZ \in \spCNn$ and $C \in \spS$:
	\begin{enumerate}[(i)]
		\item \textit{$\spS$-linearity}: $\IPS{\vX+\vY}{\vZ C} = \IPS{\vX}{\vZ} C + \IPS{\vY}{\vZ} C$; \label{S:linearity}
		\item \textit{symmetry}: $\IPS{\vX}{\vY} = \IPS{\vY}{\vX}^*$; \label{S:symmetry}
		\item \textit{definiteness}: $\IPS{\vX}{\vX}$ is positive definite if $\vX$ has full rank, and $\IPS{\vX}{\vX} = 0$ if and only if $\vX = 0$. \label{S:definiteness}
	\end{enumerate}
\end{definition}

\begin{definition} \label{def:scal_quot}
	A mapping $\N$ which maps all $\vX \in \spCNn$ with full rank on a matrix $\N(\vX) \in \spS$ is called a {\em scaling quotient} if for all such $\vX$, there exists $\vY \in \spCNn$ such that $\vX = \vY \N(\vX)$ and $\IPS{\vY}{\vY} = I_s$. 
\end{definition}

\begin{definition} \label{def:block_ortho_norm} Let $\vX, \vY \in \spCNn$.
	\begin{enumerate}[(i)]
		\item $\vX, \vY$ are {\em $\IPSnoarg$-orthogonal}, if $\IPS{\vX}{\vY} = 0$.
		\item $\vX$ is {\em $\IPSnoarg$-normalized} if $\N(\vX) = I$.
		\item $\{\vX_1, \dots, \vX_m \} \subset \spCNn$ is {\em $\IPSnoarg$-orthonormal} if $\IPS{\vX_i}{\vX_j} = \delta_{ij} I$, where $\delta_{ij}$ is the Kronecker delta.
	\end{enumerate}
\end{definition}

We say that a set of vectors $\{\vX_j\}_{j=1}^m \subset \spCNn$ {\em $\spS$-spans} a space $\spK \subset \spCNn$ and write $\spK = \spnS\{\vX_j\}_{j=1}^m$, where
\[
\spnS\{\vX_j\}_{j=1}^m := \left\{\sum_{j=1}^{m} \vX_j \Gamma_j : \Gamma_j \in \spS \mbox{ for all } j = 1, \ldots, m \right\} \subset \spCNn.
\]
The set $\{\vX_j\}_{j=1}^m$ constitutes an {\em $\IPSnoarg$-orthonormal basis} for $\spK$ if $m$ is the dimension of $\spK$, $\spK = \spnS\{\vX_j\}_{j=1}^m$, and $\{\vX_j\}_{j=1}^m$ are orthonormal.

We define the {\em $m$th block Krylov subspace for $A \in \spCNN$ and $\vB \in \spCNn$} as
\[
\spKS_m(A, \vB) = \spnS\{\vB, A\vB, \ldots, A^{m-1}\vB\}.
\]
There exist many choices for $\spS$, $\IPSnoarg$, and $N$; see \cite{ElbouyahyaouiMessaoudiSadok2008, FrommerLundSzyld2017} for further details.  We consider only the {\em classical} and {\em global} paradigms, because of the potential to speed up convergence and the low computational effort per iteration, respectively:
\begin{center}
	\begin{tabular}{c| c c}
							& classical 					& global			 	\\ \hline
		$\spS$				& $I_n \otimes \spC$			& $\spC I_n$			\\
		$\IPS{\vX}{\vY}$	& $\diag(\vX^*\vY)$				& $\frac{1}{n} \tr{\vX^*\vY} I_n$	\\					
		$\N(\vX)$			& $R$, where $\vX = \vQ R$		& $\frac{1}{\sqrt{n}} \normF{\vX} I_n$
	\end{tabular}
\end{center}
\vspace{6pt}
Algorithm~\ref{alg:block_arnoldi} is the generalized block Arnoldi procedure. We assume that Algorithm~\ref{alg:block_arnoldi} runs to completion without breaking down, i.e., that we obtain
\begin{enumerate}[(i)]
	\item a $\IPSnoarg$-orthonormal basis $\{\vV_k\}_{k=1}^{m+1} \subset \spCNn$, such that each $\vV_k$ has full rank and $\spKS_m(A,\vB) = \spnS\{\vV_k\}_{k=1}^{m}$, and
	\item a block upper Hessenberg matrix $\HH_m \in \spSmm$ and $H_{m+1,m} \in \spS$,
\end{enumerate}
all satisfying the \textit{block Arnoldi relation}
\begin{equation} \label{eq:BAR}
A \bVV_m = \bVV_m \HH_m + \vV_{m+1} H_{m+1,m} \vE_m^*,
\end{equation}
where $\bVV_m = [\vV_1 | \dots | \vV_m] \in \spCNnm$, and $(\HH_m)_{ij} = H_{ij}$. Note that $\HH_m$ has dimension $mn \times mn$; so long as $m \ll p$, $\HH_m$ will be significantly smaller than $A$.  Otherwise, it will be necessary to partition the right-hand side $\vB$ and compute the action of $f(A)$ on each partition separately.  Furthermore, in the global paradigm, $\HH_m$ has a Kronecker structure $H \otimes I_n$, where $H \in \spC^{m \times m}$, so the storage of $\HH_m$ can be reduced.

\begin{algorithm}[htbp!]
	\caption{Block Arnoldi \label{alg:block_arnoldi}}
	\begin{algorithmic}[1]
		\STATE \textbf{Given:} $A$, $\vB$, $\spS$, $\IPSnoarg$, $\N$, $m$
		\STATE Compute $\Beta = \N(\vB)$ and $\vV_1 = \vB B^\inv$ \label{line:norm1}
		\FOR{$k = 1, \dots, m$}
		\STATE Compute $\vW = A \vV_k$
		\FOR{$j = 1, \dots, k$}
		\STATE $H_{j,k} = \IPS{\vV_j}{\vW}$
		\STATE $\vW = \vW - \vV_j H_{j,k}$
		\ENDFOR
		\STATE Compute $H_{k+1,k} = \N(\vW)$ and $\vV_{k+1} = \vW H_{k+1,k}^\inv$ \label{line:norm2}
		\ENDFOR
		\RETURN $\Beta$, $\bVV_m = [\vV_1 | \dots | \vV_m]$, $\HH_m = (H_{j,k})_{j,k = 1}^m$, $\vV_{m+1}$, and $H_{m+1,m}$
	\end{algorithmic}
\end{algorithm}

The paper \cite{FrommerLundSzyld2017} also establishes theory for a block full orthogonalization method for functions of matrices (\bfomfom).  The \bfomfom approximation is defined as
\begin{equation} \label{eq:Fm}
	\vF_m := \bVV_m f(\HH_m) \vE_1 \Beta,
\end{equation}
which indeed reduces to a block FOM approximation when $f(z) = z^\inv$; see, e.g., \cite{Saad2003}.

With the end application being tensors, restarts will be necessary to mitigate memory limitations imposed by handling higher-order data. Restarts for \bfomfom are developed in detail in \cite{FrommerLundSzyld2017} for functions with Cauchy-Stieltjes representations, including the matrix exponential; we present here a high-level summary of the procedure as Algorithm~\ref{alg:bfomfom_restarts}.  Restarts are performed by approximating an error function via adaptive quadrature rules; this step is represented by $\errfunc_m^\k$ in line~\ref{line:errfunc}.
\begin{algorithm}[htbp!]
	\caption{\bfomfom\m: block full orthogonalization method for functions of matrices with restarts \label{alg:bfomfom_restarts}}
	\begin{algorithmic}[1]
		\STATE Given $f$, $A$, $\vB$, $\spS$, $\IPSnoarg$, $\N$, $m$, $t$, \tol
		\STATE Run Algorithm~\ref{alg:block_arnoldi} with inputs $A$, $\vB$, $\spS$, $\IPSnoarg$, $\N$, and $m$ and store \linebreak$\bVV_{m+1}^\one$, $\HHunder_m^\one$, and $\Beta^\one$
		\STATE Compute and store $\vF_m^\one = \bVV_m^\one f\big(\HH_m^\one\big) \vE_1 \Beta$
		\FOR{$k = 1, 2, \ldots$, until convergence}
		\STATE Run Algorithm~\ref{alg:block_arnoldi} with inputs $A$, $\vV_{m+1}^\k$, $\spS$, $\IPSnoarg$, $\N$, and $m$ and \linebreak store $\bVV_{m+1}^\kmore$ in place of the previous basis
		\STATE Compute error approximation $\errapprox_m^\k$ \label{line:errfunc}
		\STATE Update $\vF_m^\kmore := \vF_m^\k + \errapprox_m^\k$
		\ENDFOR
		\RETURN $\vF_m^\kmore$
	\end{algorithmic}
\end{algorithm}

\subsection{Computational complexity and optimizations} \label{subsec:complexity}
Operation counts for \bfomfom are not provided in \cite{FrommerLundSzyld2017}, so we present our own calculations here.  We first consider the two versions of block Arnoldi, classical and global, and then look at single cycle of \bfomfom.

\paragraph{Sparse matrix times a block vector}
We assume $A$ is block circulant, where each $n \times n$ block is sparse with $\bigO(n)$ nonzero entries.  We also assume that $p = \bigO(n)$.  Then $A$ itself can be stored in $\bigO(n^2)$ entries and does not need to be formed explicitly; see also the discussion in \cite{KilmerBramanHaoHoover2013}.  However, the action of $A$ on a block vector will cost the same as for a matrix with $\bigO(n^3)$ nonzero entries, because the full block circulant has $p^2$ blocks, so the product $A \vV$ has complexity $\bigO(n^4)$.

\paragraph{Classical Block Arnoldi}
In addition to the action of $A$ on block vectors, the main kernels of the classical version of Algorithm~\ref{alg:block_arnoldi} are the QR factorization for block vectors, the inner product $\vX^*\vY$, and multiplication between block vectors and small square matrices of the form $\vV C$.  The Householder QR factorization costs $\bigO(n^6)$ \cite{TrefethenBau1997}, the inner product $\bigO(n^4)$, and the $\vV C$ product  $\bigO(n^4)$.  Running $m$ steps of Algorithm~\ref{alg:block_arnoldi} requires $m+1$ QR factorizations, $m$ $A \vV$ products, and $\frac{1}{2} m (m+1)$ inner products and $\vV C$ products, with some negligible addition, for a total of
\begin{equation} \label{O:cl_arnoldi}
	\cost_{\text{cl-Arnoldi}} = \bigO\left((m+1)n^6 + m (m+2)n^4 \right).
\end{equation}
It is clear that the algorithm is dominated by QR factorizations.

\paragraph{Global Block Arnoldi}
The global version of Algorithm~\ref{alg:block_arnoldi} can be made much cheaper than the classical one, because the global algorithm effectively reduces to running an Arnoldi routine on vectors of size $n^2p \times 1$ with the Frobenius norm and inner product.  Thus the QR factorization is replaced by $\bigO(n^3)$, the cost of normalizing a vector; the inner product is also now only $\bigO(n^3)$, and the $\vV C$ product reduces to multiplying $\vV$ by a scalar, which is again $\bigO(n^3)$.  The total cost of global Arnoldi is consequently dominated by $A \vV$ products:
\begin{equation} \label{O:gl_arnoldi}
	\cost_{\text{gl-Arnoldi}} = \bigO\left(m n^4 + \left(\frac{1}{2}(m+1)(m+2) \right)n^3 \right).
\end{equation}

\paragraph{Classical \bfomfom}
The computation of \eqref{eq:Fm} can be broken into three stages: $f(\HH_m)$, a $\vV C$-type product, and the evaluation of the basis $\bVV_m$ on the resulting matrix.  The matrix function should be computed via a direct algorithm, such as the Schur-Parlett Algorithm, described in \cite[Chapter 9]{Higham2008}.  Without function-specific information, $\texttt{funm}$ requires up to $\bigO(m^4n^4)$; function-specific algorithms, or algorithms that take advantage of the eigenvalue distribution of $A$ (described in other chapters of \cite{Higham2008}) may be cheaper.  The product $f(\HH_m) \vE_1$ does not require computation, since we can just extract the first block column from $f(\HH_m)$; $f(\HH_m) \vE_1 \Beta$ then requires only $\bigO(mn^3)$.  Finally, $\bVV_m$ applied to an $mn \times n$ matrix requires $\bigO(mn^4)$.  Including the Arnoldi cost \eqref{O:cl_arnoldi}, the total for computing \eqref{eq:Fm} is then
\begin{equation} \label{O:cl_bfomfom}
	\cost_{\text{cl-\bfomfom}} = \bigO\left((m+1)n^6 + \left(m^4 + m (m+3) \right)n^4 + mn^3\right).
\end{equation}

\paragraph{Global \bfomfom}
The same three stages apply for the global version of \bfomfom as for the classical, but we can make many computations cheaper.  The matrix function $f(\HH_m) = f(H_m) \otimes I_n$, where the Kronecker product need not be formed explicitly, so the cost reduces to $\bigO(m^4)$.  The matrix $\Beta$ can be regarded as a scalar, and using the same column-extracting trick, $f(\HH_m) \vE_1 \Beta$ comes at a negligible cost, $\bigO(m)$.  Finally, the product with the basis $\bVV_m$ can be reduced to taking scalar combinations of the basis vectors, amounting to $\bigO((m-1)n^3)$.  The total for \eqref{eq:Fm}, including the Arnoldi costs \eqref{O:gl_arnoldi} is
\begin{equation} \label{O:gl_bfomfom}
	\cost_{\text{gl-\bfomfom}} = \bigO\left(m n^4 + \left(\frac{1}{2}(m+1)(m+2) + m - 1\right)n^3 + m^4 \right).
\end{equation}

\paragraph{Restarts}
Determining the computational complexity for restarted \bfomfom is challenging, because the quadrature rule is adaptive, and the number of nodes per restart cycle plays a crucial role in how much work is done.  Typically, the cost per additional cycle should be less than computing the first step, and it should decrease as the algorithm approaches convergence, because the quadrature rule can be approximated progressively less accurately; see, e.g., \cite{FrommerGuettelSchweitzer2014a}.  In the worst-case scenario, however, the cost of successive cycles may be as expensive as the first, so it is reasonable to regard \eqref{O:cl_bfomfom} and \eqref{O:gl_bfomfom} as upper bounds.

\subsection{Block diagonalization and the discrete Fourier transform} \label{subsec:block_diag_dft}
Per recommendations in \cite{KilmerBramanHaoHoover2013, KilmerMartin2011}, we can improve the computational effort of $\fAABB$ by taking advantage of the fact that $\bcirc{\AA}$ can be block diagonalized by the discrete Fourier transform (DFT) along the tubal fibers of $\AA$.  Let $F_p$ denote the DFT of size $p \times p$.  Then we have that
\[
(F_p \otimes I_n) \bcirc{\AA} (F_p^* \otimes I_n)
= \begin{bmatrix}
D_1	&		&			&		\\
& D_2	&			&		\\
&		& \ddots	&		\\
&		&			& D_p
\end{bmatrix} =: D,
\]
where $D_k$ are $n \times n$ matrices.  Then by Theorem~\ref{thm:mat_func_props}(iii),
\[
f(\bcirc{\AA})
= (F_p^* \otimes I_n) f(D) (F_p \otimes I_n).
\]
Since each $D_i$, $i = 1, \ldots, p$ may be a full $n \times n$ matrix, applying $D$ itself to block vectors will still take $\bigO(n^4)$ operations.  However, this structure is easier to parallelize and requires less memory-movement than using $\AA$ directly, which will play an important role in high-performance applications.

\subsection{The tensor t-exponential on a small third-order network} \label{subsec:tensor_texp}
We take $\AA \in \spC^{n \times n \times p}$ to be a tensor whose $p$ frontal faces are each adjacency matrices for an undirected, unweighted network.  More specifically, the frontal faces of $\AA$ are symmetric, and the entries are binary.  The sparsity structure of this tensor is given in Figure~\ref{fig:spyAA} for $n = p = 50$. Note that we must actually compute $\exp(\AA) * \II = \fold{\exp(\bcirc{\AA})\vE_1}$ (\cf Definition~\eqref{def:fAA}).  With $n = p = 50$, this leads to a $2500 \times 2500$ matrix function times a $2500 \times 50$ block vector.  The sparsity patterns of $\bcirc{\AA}$ and $D$ are shown in Figure~\ref{fig:spy_plots}.  The block matrix $D$ is determined by applying \MATLAB's fast Fourier transform to $\bcirc{\AA}$.  Note that $\bcirc{\AA}$ is not symmetric, but it has a banded structure.  It should also be noted that while the blocks of $D$ appear to be structurally identical, they are not numerically equal.
\begin{figure}[htbp!]
	\begin{center}
		\includegraphics[width=.45\textwidth]{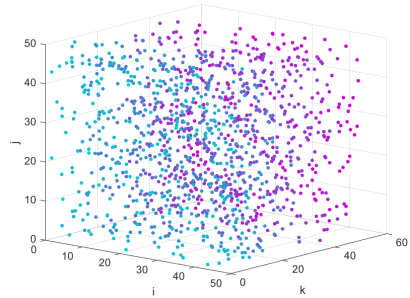}
	\end{center}
	\caption{\small Sparsity structure for $\AA$.  Blue indicates that a face is closer to the ``front" and pink farther to the ``back"; see Figure~\ref{fig:tensors}(f) for how the faces are oriented. \label{fig:spyAA}}
\end{figure}

\renewcommand{\imsize}{.25\textwidth}
\begin{figure}[htbp!]
	\begin{center}
		\begin{tabular}{c c c c c}			
			{\resizebox{\imsize}{!}{
					\includegraphics[width=\imsize]{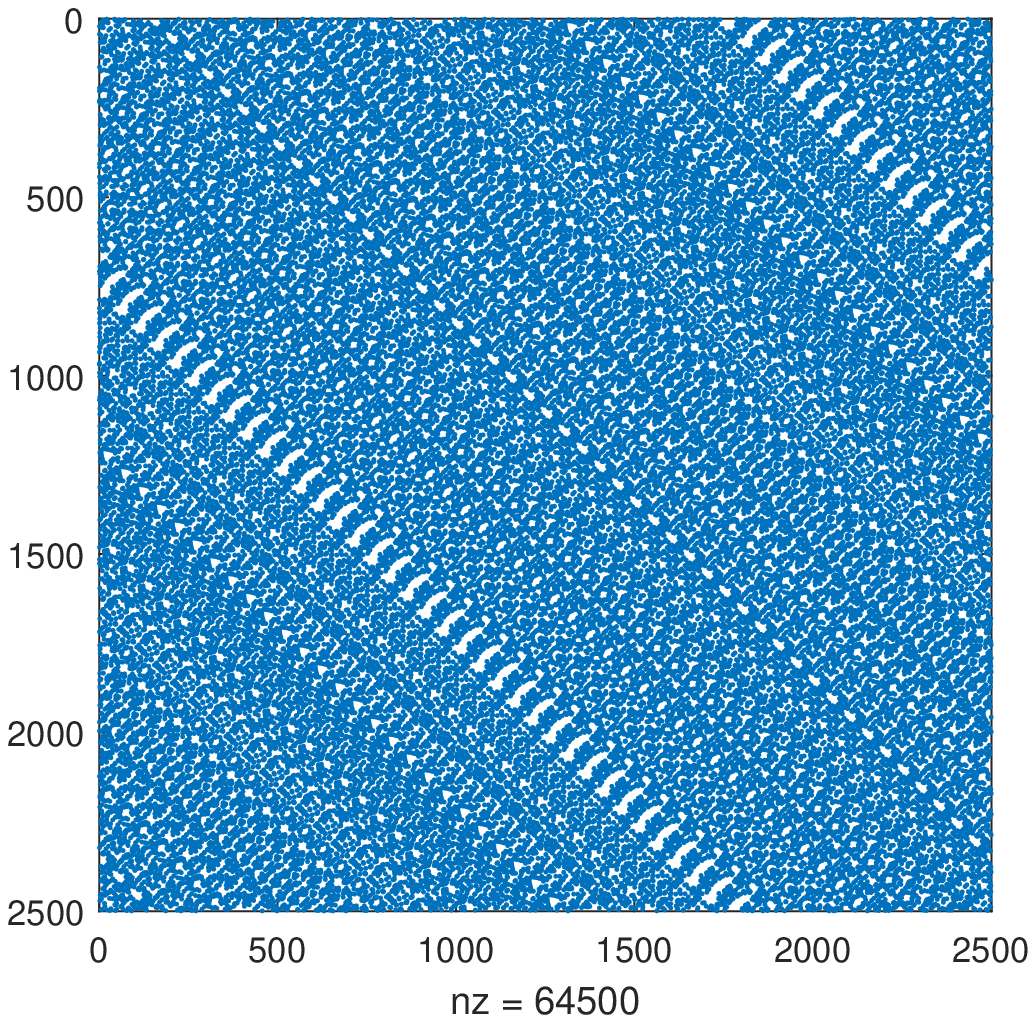}}} &
			
			&
			{\resizebox{\imsize}{!}{
					\includegraphics[width=\imsize]{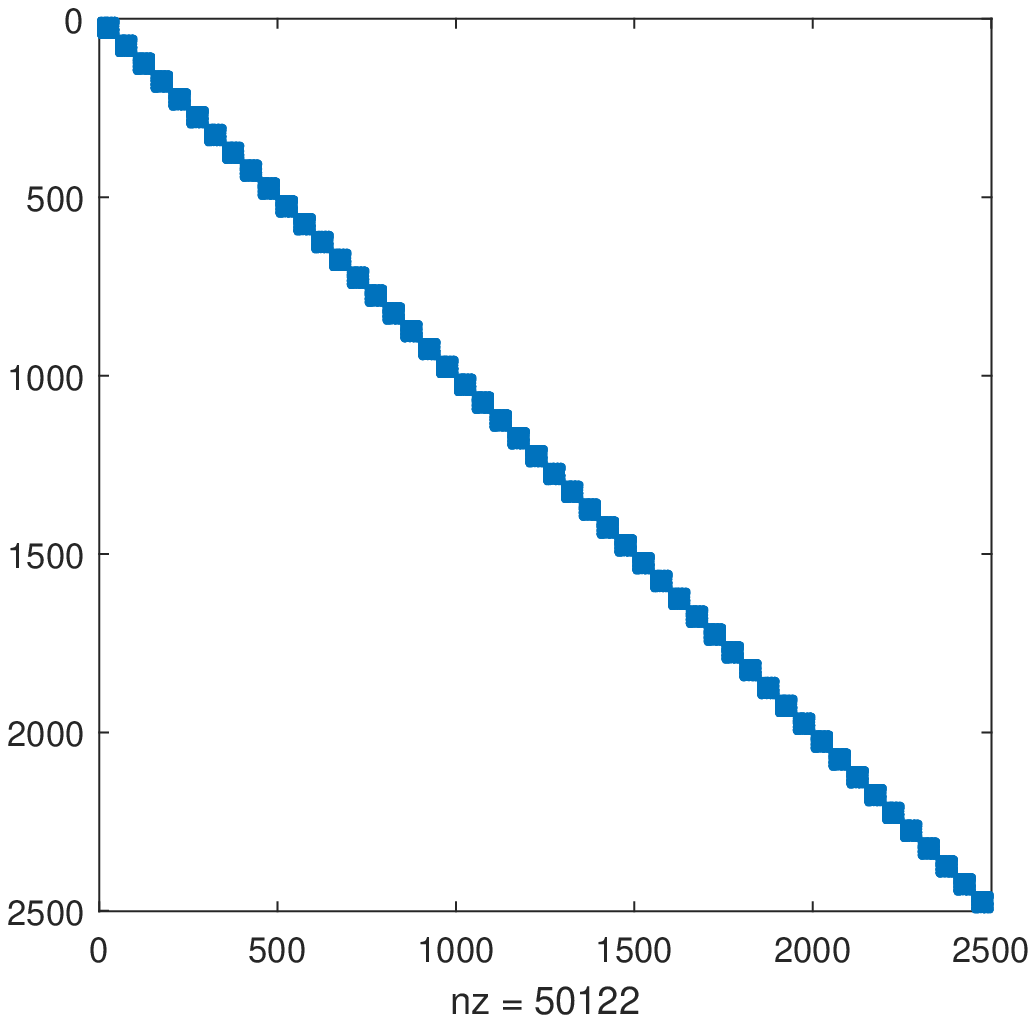}}} &
			&
			{\resizebox{\imsize}{!}{
					\includegraphics[width=\imsize]{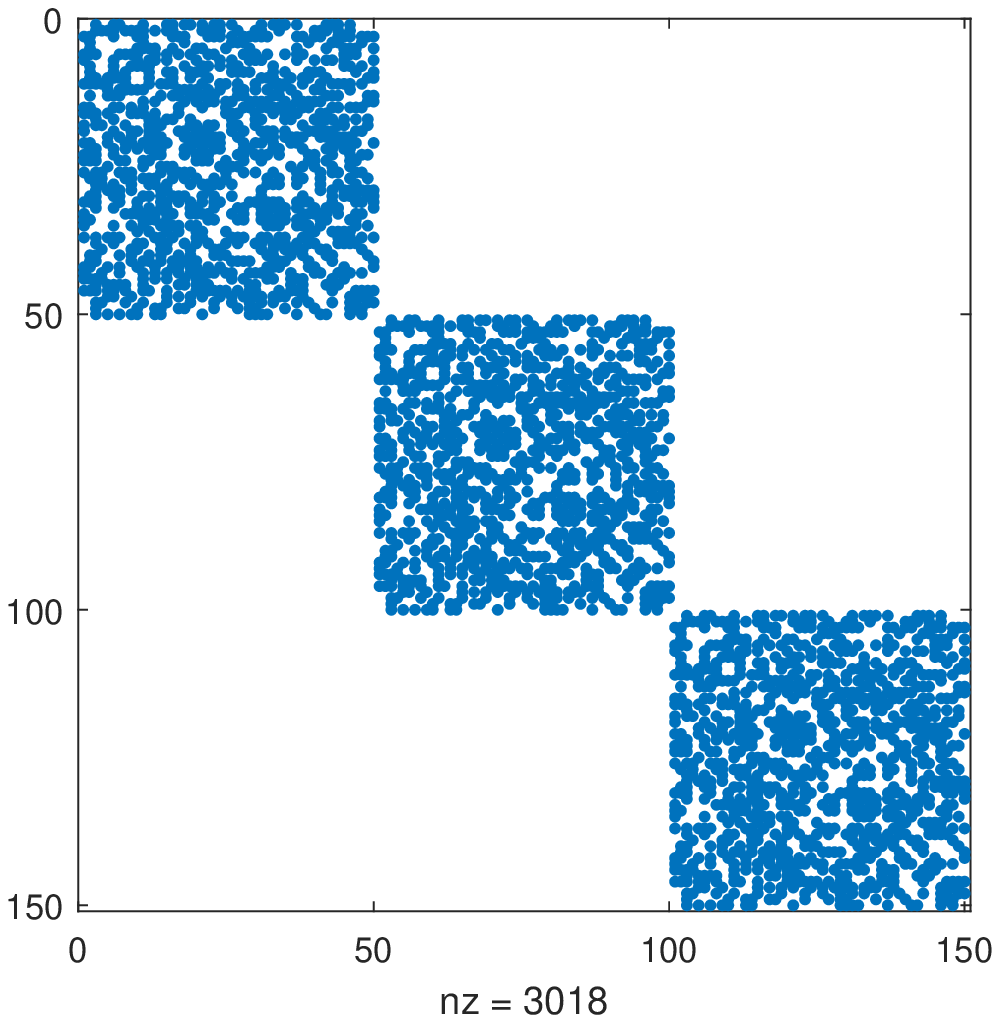}}} \\
			
			\small $\bcirc{\AA}$	& & \small $D$		& & \small zoom in on $D$
		\end{tabular}
	\end{center}
	\caption{\small Sparsity patterns for block circulants \label{fig:spy_plots}}
\end{figure}

We compute $\exp(\AA) * \II$ with the classical and global versions of \bfomfom using the \MATLAB software package \texttt{bfomfom}.\footnote{The script \texttt{tensor\_texp\_network\_v2.m} used to generate our results can be found at \url{https://gitlab.com/katlund/bfomfom-main}, along with the main body of code.} We run \MATLAB 2019a on Windows 10 on a laptop with 16GB RAM and an Intel i7 processor at 1.80GHz. The convergence behavior of each version is displayed in Figure~\ref{fig:conv_plots}, where we report the relative error per restart cycle, i.e., per $m$ iterations of Algorithm~\ref{alg:block_arnoldi}.  The restart cycle length is $m = 5$, and the error tolerance is $10^{-12}$.
\renewcommand{\imsize}{.42\textwidth}
\begin{figure}[htbp!]
	\begin{center}
		\begin{tabular}{c c}			
			{\resizebox{\imsize}{!}{
					\includegraphics[width=\imsize]{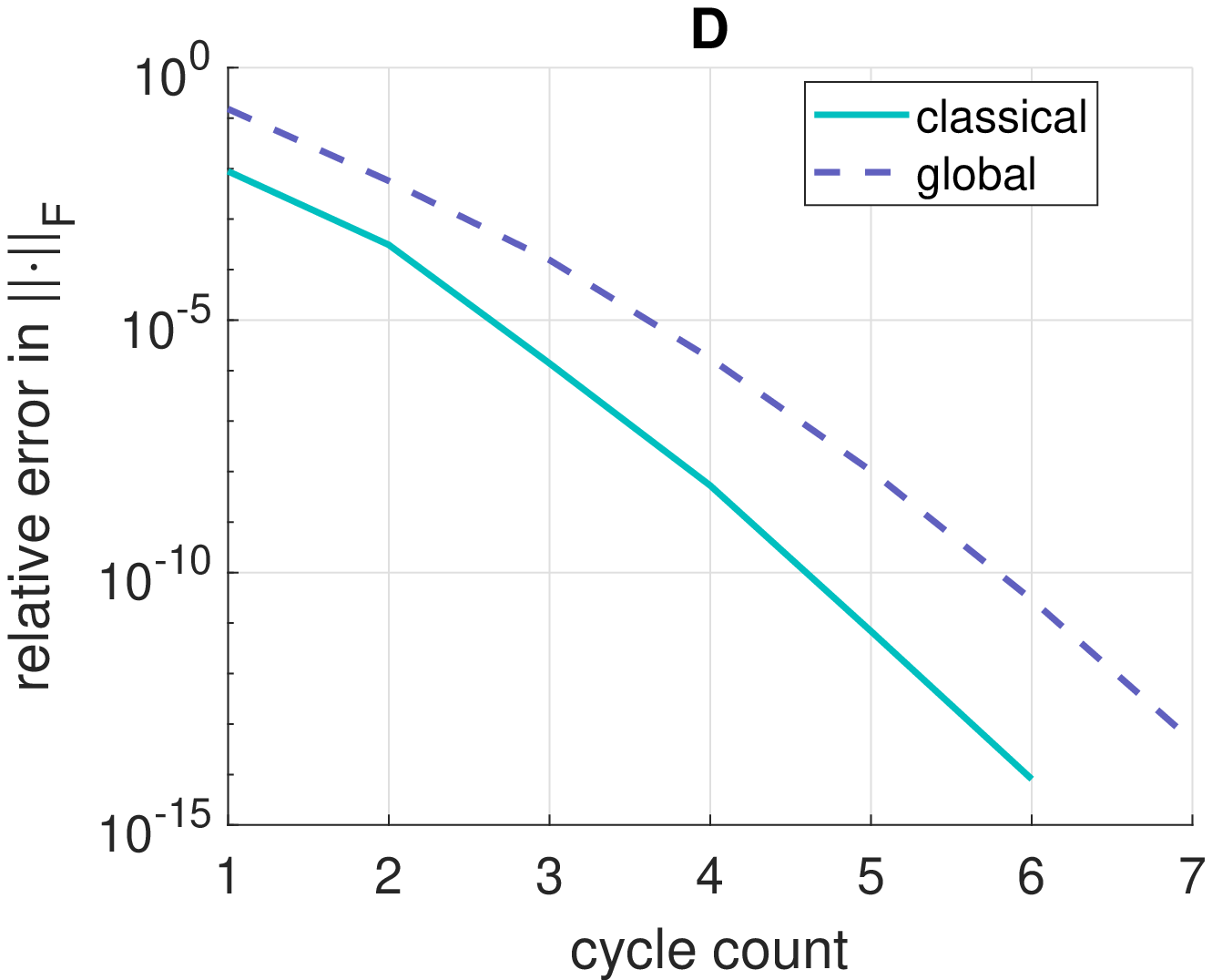}}} &
			
			{\resizebox{\imsize}{!}{
					\includegraphics[width=\imsize]{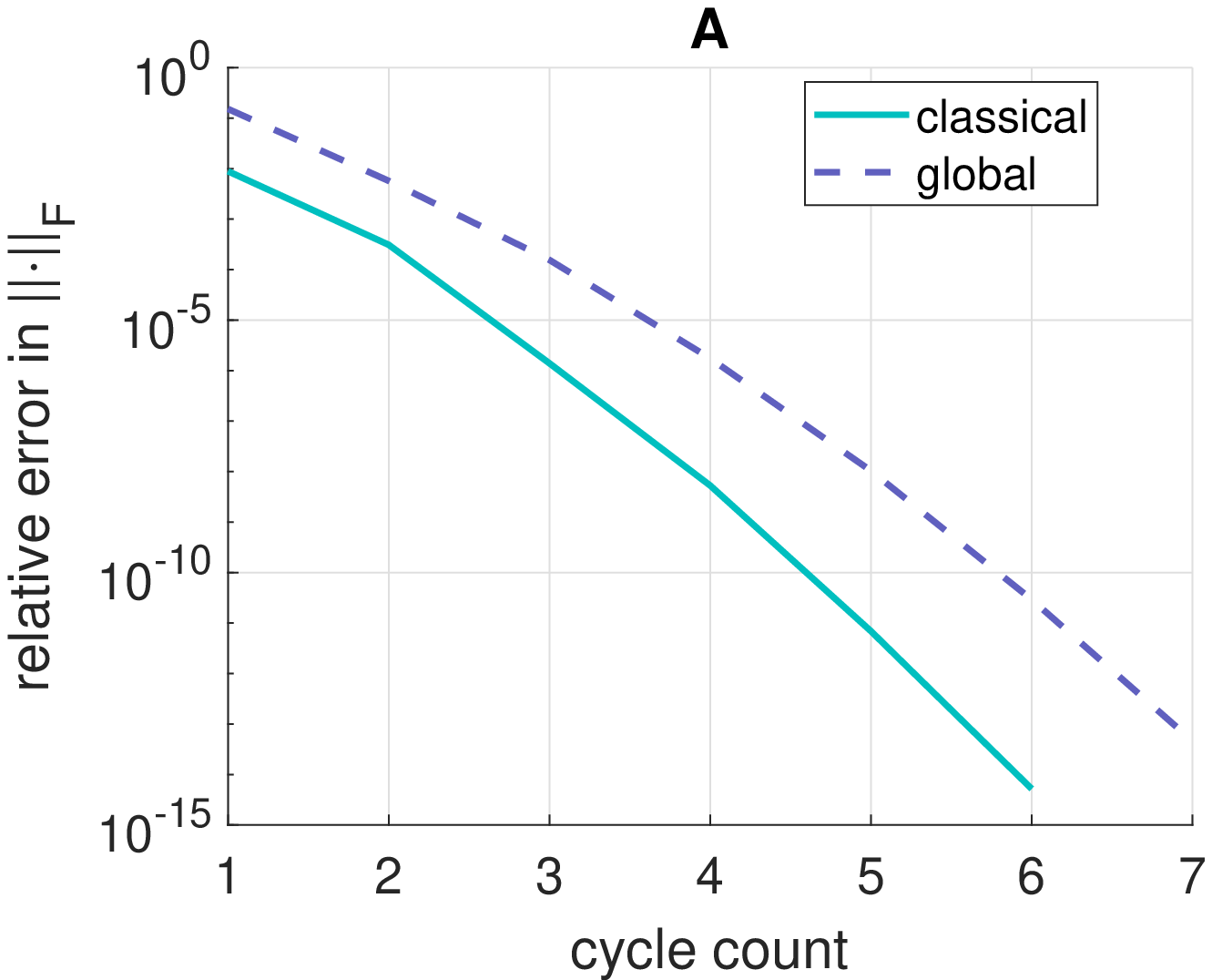}}} \\
			
			\small (A) & \small (B)
		\end{tabular}
	\end{center}
	\caption{\small Convergence plots for (A) classical and global methods on $\exp(D) F_p \otimes I_n \vE_1$, and (B) classical and global methods on $\exp(\bcirc{\AA}) \vE_1$. $m = 5$. \label{fig:conv_plots}}
\end{figure}
The methods based on $D$ (case (A)) are only a little less accurate than those based on $\bcirc{\AA}$ (case (B)), and they require the same number of iterations to converge.  The global methods require only one more cycle than the classical ones, but considering the computational complexity per cycle (cf.\ \eqref{O:cl_bfomfom} and \eqref{O:gl_bfomfom}), it is clear that the global methods require far less work overall.

Table~\ref{tab:vary_m} shows that for larger $m$, both classical and global methods require the same number of cycles (for either $D$- or $\bcirc{\AA}$-based approaches). For smaller values of $m$, global methods cannot attain the desired tolerance, because they exceed the maximum number of quadrature nodes allowed to perform the error update in line~\ref{line:errfunc} of Algorithm~\ref{alg:bfomfom_restarts}.  See, however, Figure~\ref{fig:conv_plots2} for the convergence behavior of the global method when $m=2$.  It still attains a high level of accuracy with much less work overall than the classical method.

\begin{table}[htbp!]
	\caption{Number of cycles needed to converge to $10^{-12}$ for different basis sizes $m$ \label{tab:vary_m}}
	\begin{center}
		\begin{tabular}{c|c|c|c|c}
					& $m=2$	& $m=5$	& $m=10$	& $m=15$	\\ \hline
		classical	& 18	& 6		& 3			& 2			\\ \hline
		global		& --	& 7		& 3			& 2			\\
		\end{tabular}
	\end{center}
\end{table}

\begin{figure}[htbp!]
	\begin{center}
		\begin{tabular}{c c}			
			{\resizebox{\imsize}{!}{
					\includegraphics[width=\imsize]{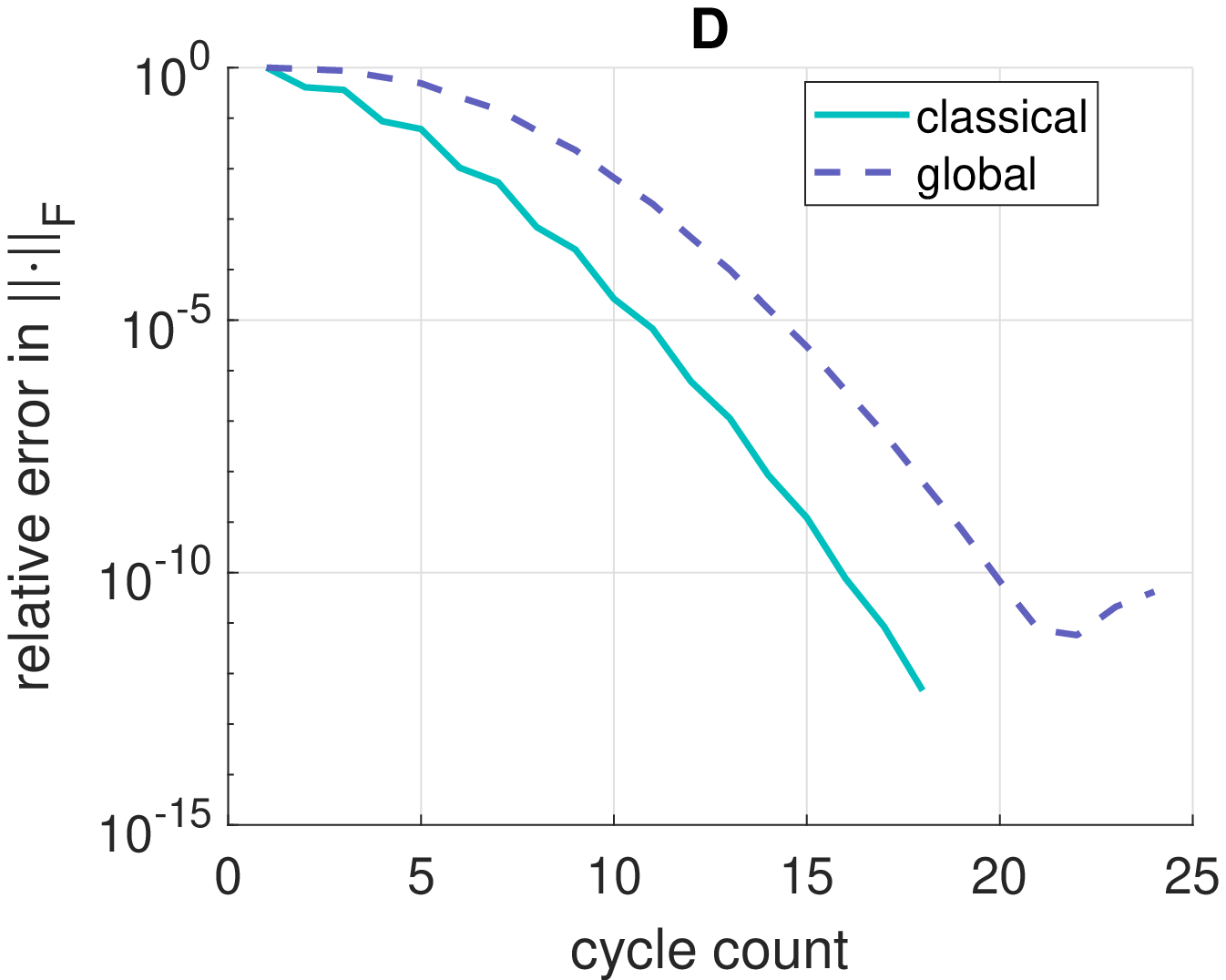}}} &
			
			{\resizebox{\imsize}{!}{
					\includegraphics[width=\imsize]{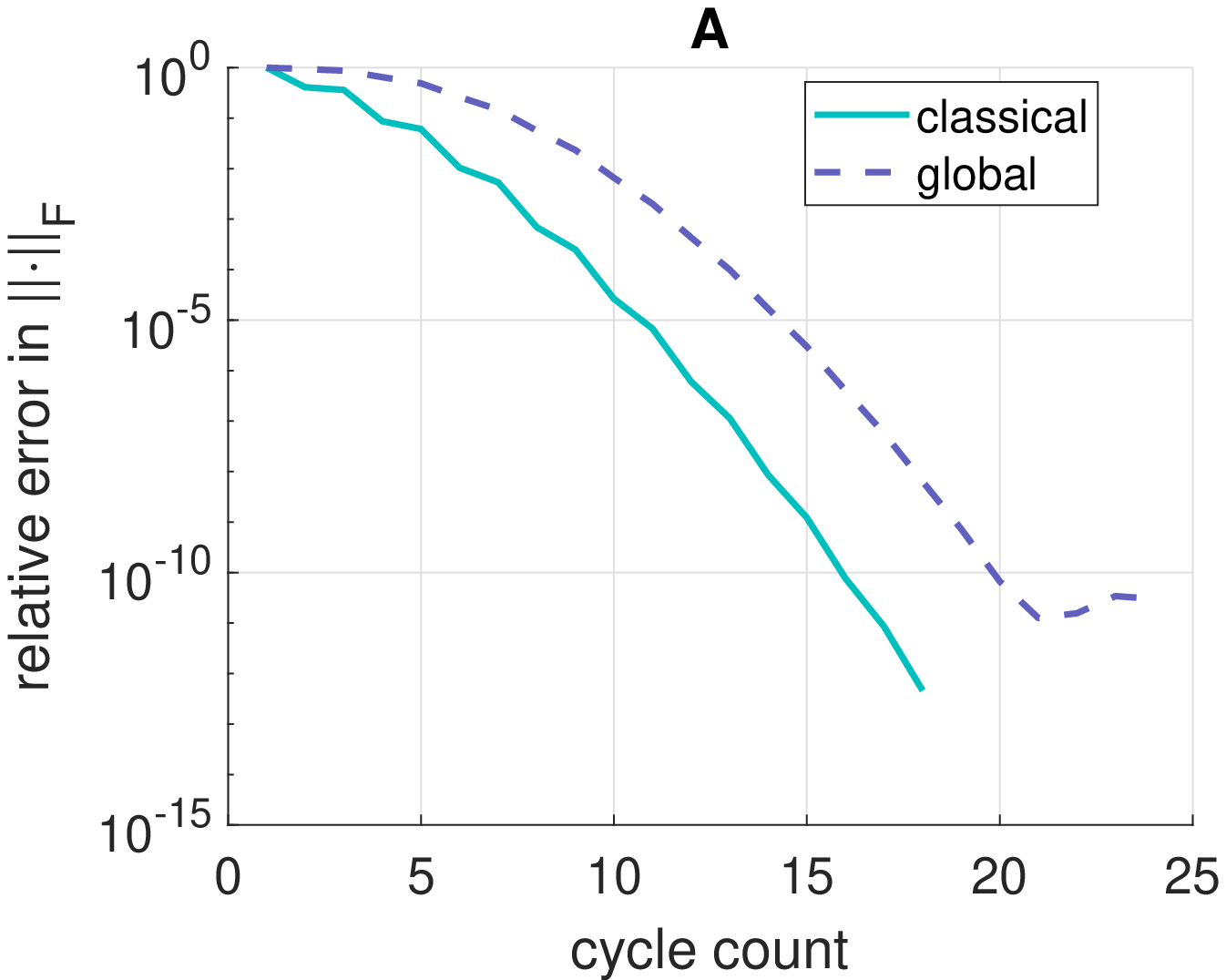}}} \\
			
			\small (A) & \small (B)
		\end{tabular}
	\end{center}
	\caption{\small Convergence plots for (A) classical and global methods on $\exp(D) F_p \otimes I_n \vE_1$, and (B) classical and global methods on $\exp(\bcirc{\AA}) \vE_1$. $m = 2$. \label{fig:conv_plots2}}
\end{figure}

\section{Conclusion} \label{sec:conclusions}
The main purpose of this report is to establish a first notion for functions of multidimensional arrays and demonstrate that it is feasible to compute this object with well understood tools from the matrix function literature.  Our definition for the tensor t-function $\fAABB$ shows versatility and consistency, and our numerical results indicate that block KSMs can compute $\fAABB$ with few iterations and still achieve high accuracy.  In particular, the global block KSM shows promise for moderate sizes, since its overall workload is significantly smaller than that of its classical counterpart.  For smaller basis sizes, which are more favorable in the context of large tensors, global methods may struggle to converge, and remedies for this situation remain an open problem.  One potential solution, that should first be explored for simple matrix functions, is to switch between global and classical paradigms in some optimal way so as to minimize overall computational effort while maximizing attainable accuracy.

The second aim of this report is to invite fellow researchers to pursue the many open problems posed by this new definition and to devise tensor function definitions for other paradigms.  Other key problems include exploring applications of $\fAABB$ in real-life scenarios and comparing our definition of communicability for a third-order network to existing network analysis tools.

\paragraph{Acknowledgments} 
The author would like to thank Misha Kilmer for useful conversations and the images used in Figure~\ref{fig:tensors}, Andreas Frommer and Daniel B. Szyld for comments on Theorem~\ref{thm:tensor_t-func_props}, Francesca Arrigo for suggesting a new application in Section~\ref{sec:network_analysis}, and the anonymous referee for multiple insights.

\bibliographystyle{siamplain}
\bibliography{tensors}

\end{document}